\documentclass[12pt]{article}
\usepackage{graphicx}
\usepackage{amsmath}
\usepackage{amssymb}
\usepackage{theorem}

\sloppy
\pagestyle{plain}

\numberwithin{equation}{section}

\textheight=8.5in
\textwidth=6.0in
\addtolength{\oddsidemargin}{-.25in}

\newtheorem{thm}{Theorem}[section]
\newtheorem{lemma}[thm]{Lemma}
\newtheorem{prop}[thm]{Proposition}
\newtheorem{cor}[thm]{Corollary}
{\theorembodyfont{\rmfamily}

\newtheorem{conj}[thm]{Conjecture}
\newtheorem{rmk}[thm]{Remark}
}

\renewcommand{\thesubsection}{\arabic{section}.\arabic{subsection}}

\newcommand{\qed}{\hfill \mbox{\raggedright \rule{.07in}{.1in}}}
 
\newenvironment{proof}{\vspace{1ex}\noindent{\bf
Proof}\hspace{0.5em}}{\hfill\qed\vspace{1ex}}
\newenvironment{pfof}[1]{\vspace{1ex}\noindent{\bf Proof of
#1}\hspace{0.5em}}{\hfill\qed\vspace{1ex}}

\newcommand{\R}{{\mathbb R}}
\newcommand{\T}{{\mathbb T}}
\newcommand{\Z}{{\mathbb Z}}
\newcommand{\PP}{{\mathbb P}}

\newcommand{\SO}{{\bf SO}}
\newcommand{\Fix}{\operatorname{Fix}}
\newcommand{\diam}{\operatorname{diam}}

\title{Central Limit Theorems and Suppression of Anomalous Diffusion for Systems with Symmetry}

\author{
Georg A. Gottwald\thanks{School of Mathematics and Statistics, University of Sydney, Sydney 2006 NSW, Australia}
\and
Ian Melbourne\thanks{Mathematics Institute, University of Warwick, Coventry, CV4 7AL, UK}
}

\date{27 November 2012; updated 19 February 2015}

\begin{document}

\maketitle

 \begin{abstract}
We give general conditions for the central limit theorem and weak convergence to Brownian motion (the weak invariance principle / functional central limit theorem) to hold for
observables of compact group extensions of nonuniformly expanding maps.
In particular, our results include situations where the central limit theorem
would fail, and anomalous behaviour would prevail, if the compact group were not present.   

This has important consequences for systems with noncompact Euclidean symmetry and provides the rigorous proof for a conjecture made in our paper:
A Huygens principle for diffusion and anomalous diffusion in spatially extended
systems.
{\em Proc.\ Natl.\ Acad.\ Sci. USA} {\bf 110} (2013) 8411--8416.

 \end{abstract}

 \section{Introduction} 
 \label{sec-intro}

It is by now well-understood that statistical limit laws such as the central limit theorem (CLT) and corresponding invariance principles (convergence to Brownian motion) hold for large classes of nonuniformly hyperbolic dynamical systems~\cite{Gouezel04,MN05,Tyran-Kaminska05,Young98,Young99}.
In this setting, summable decay of correlations is sufficient for the central limit theorem to hold.
There are also numerous results for compact group extensions of such maps~\cite{Dolgopyat02},
in particular for equivariant observables which occur naturally in systems with symmetry~\cite{NMA01,FMT03,MN04,MN04b}.

For systems modelled by Young towers with nonsummable decay of correlations~\cite{Young99}, the central limit theorem generally fails for typical H\"older observables.
In certain instances, there is convergence instead to a stable law with nonstandard normalisation $n^s$, $s>\frac12$, see
Gou\"ezel~\cite{Gouezel04}.  (The corresponding invariance principle, namely weak convergence to a stable L\'evy process with superdiffusive growth rate $t^s$, is also valid~\cite{MZapp}.)

Passing to compact group extensions of such systems, in a recent paper~\cite{GM13} we described a dichotomy whereby the superdiffusion persists or is suppressed in favour of normal diffusion.   The dichotomy is characterised by the equivariance properties of the observables.   The arguments in~\cite{GM13} are heuristic, backed up by numerical simulations.    In this paper, we give a proof of the suppression statements.    The work of~\cite{GM13} was motivated by the study of systems with noncompact Euclidean symmetry, see Subsection~\ref{sec-euc}.

Our main theoretical result, Theorem~\ref{thm-CLT}, gives very general conditions under which the CLT and the weak invariance principle (WIP) hold for equivariant observables of compact group extensions of a nonuniformly expanding map $f:X\to X$.   
Essentially, the problem is reduced to proving that a certain derived observable (denoted $V^*$ in the sequel) lies in $L^2$.   The second main contribution of this paper is to
verify this $L^2$ condition in the situation of~\cite{GM13}.

\begin{rmk}
There is a connection between our results and recent work of
Peligrad \& Wu~\cite{PeligradWu10} and
Cohen \& Conze~\cite{CohenConzesub}.
They consider {\em rotated sums} of the form
$\sum_{j=0}^{n-1} e^{ij\theta}v\circ f^j$ and prove central limit theorems
under extremely mild conditions:  it suffices~\cite{CohenConzesub} that 
$f$ is exact and $v\in L^2$.   This corresponds to the case of
circle extensions with
constant cocycle $h\equiv e^{i\theta}$ and observables 
$\phi(x,\psi)=e^{i\psi}v(x)$ in the notation of this paper.
The constancy of $h$ enables the use of Fourier-analytic techniques.
For our results we require much stronger assumptions on the dynamics and
the function $v$, but we do not require that $h$ is constant.
\end{rmk}

First, we focus on the specific case
of Pomeau-Manneville intermittency maps~\cite{PomeauManneville80}, before turning
to a more general class of nonuniformly expanding maps in Subsection~\ref{sec-Intro-NUE}.

\subsection{Intermittency maps}
For ease of exposition,
we consider the family of maps $f:X\to X$, where $X$ is the interval $[0,1]$,
studied by~\cite{LiveraniSaussolVaienti99}.
For $\gamma\ge0$, let 
\begin{align} \label{eq-LSV}
f(x)=\begin{cases}   x(1+2^\gamma x^\gamma), & x\in[0,\frac12] \\
2x-1, & x\in(\frac12,1].
\end{cases}
\end{align}
We are interested in the statistical properties of
compact group extensions of $f$ for $\gamma\in[0,1)$.   

The statistical properties of $f$ itself are well understood.   
There is a unique absolutely continuous ergodic invariant probability measure $\mu$.
If $\gamma=0$, then $f$ is the
doubling map with exponential decay of correlations.  For $\gamma\in(0,1)$,
it is known~\cite{Hu04} that the correlation function
$\rho(n)=\int_X v\,w\circ f^n\,d\mu-\int_X v\,d\mu\,\int_X w\,d\mu$
satisfies $|\rho(n)|\le Cn^{-((1/\gamma)-1)}$ for $v$ H\"older, $w\in L^\infty$, and moreover this is sharp.

In the case of summable decay of correlations, namely $\gamma\in[0,\frac12)$, the following central limit theorem holds for H\"older observables
$v:X\to\R^d$.   Suppose that $\int_X v\,d\mu=0$ and define
$v_n=\sum_{j=0}^{n-1}v\circ f^j$.    Then $n^{-\frac12}v_n\to_d~Y$
where $Y$ is normally distributed with mean $0$ and variance $\sigma^2$ (typically positive).   The case of nonsummable decay of correlations, $\gamma\in[\frac12,1)$, is quite different.
For $\gamma=\frac12$, there is still convergence to a 
normal distribution, but if $v(0)\neq0$ then it is necessary to normalise
by $(n\log n)^\frac12$ instead of $n^\frac12$.
For $\gamma\in(\frac12,1)$ and $v(0)\neq0$, the required
normalisation is $n^\gamma$ and $n^{-\gamma}v_n\to_d Y_\alpha$ where
$Y_\alpha$ is a one-sided stable law of order $\alpha=1/\gamma$.
The results for $\gamma\in[\frac12,1)$ are due to Gou\"ezel~\cite{Gouezel04},
who also showed that the ordinary central limit theorem prevails for observables
$v$ with sufficiently large H\"older exponent when $v(0)=0$.

To summarise,  the CLT holds in the {\em strongly chaotic} case $\gamma\in[0,\frac12)$,
but anomalous (superdiffusive)
scaling rates hold typically in the {\em weakly chaotic} case
$\gamma\in[\frac12,1)$.  However,
the anomalous diffusion is suppressed, and normal diffusion prevails, for
smooth enough observables that vanish at the origin.
In addition, the corresponding WIPs are valid: Dedecker \& Merlev\`ede~\cite{DedeckerMerlevede09}  prove weak convergence to Brownian motion in the cases where~\cite{Gouezel04} proves the CLT, and Melbourne \& Zweim\"uller~\cite{MZapp} prove weak convergence to a
L\'evy process in the cases where~\cite{Gouezel04} obtains a stable law.
 
\paragraph{Compact group extensions of intermittency maps}
Next, we consider the generalisation of these results for compact group extensions and {\em equivariant} observables~\cite{NMA01}.
In certain situations~\cite{GM13} it turns out that the above results in the weakly chaotic
case are reversed, namely that suppression of anomalous diffusion is generic
and anomalous diffusion is the degenerate case.
So far our claims in~\cite{GM13} on anomalous diffusion are conjectural, but we present here 
rigorous results on suppression.

Let $G$ be a compact connected Lie group with Haar measure $\nu$.
Consider the group extension $f_h:X\times G\to X\times G$ given by
$f_h(x,g)=(fx,gh(x))$ where $h:X\to G$ is a H\"older cocycle.
The product measure $m=\mu\times\nu$ is an $f_h$-invariant probability
measure, and is assumed throughout to be ergodic.

\begin{rmk}
Ergodicity of $m$ is typical in the following
strong sense.   The set of H\"older cocycles $h:X\to G$ for which $m$ is not ergodic lies inside
a closed subspace of infinite codimension in the space of all H\"older cocycles~\cite{FMT05}.
\end{rmk}

Let $\R^d$ be a representation of $G$; without loss $G$ acts orthogonally
on $\R^d$.   
We consider equivariant observables $\phi:X\times  G\to\R^d$ of the form 
$\phi(x,g)=g\cdot v(x)$ where $v:X\to\R^d$ is H\"older.    
Let $\phi_n=\sum_{j=0}^{n-1}\phi\circ f_h^j$.  
Throughout, we suppose that $\int_{X\times G} \phi\,dm=0$.

\begin{thm}[CLT]  \label{thm-CLT_PM}
Let $\gamma\in(0,\frac12)$.  Assume that $v:X\to\R^d$, $h:X\to G$ are H\"older.
Then $n^{-\frac12}\phi_n\to_d N(0,\Sigma)$
as $n\to\infty$, where $\Sigma$ is a $d\times d$ covariance matrix
satisfying $g\Sigma=\Sigma g$ for all $g\in G$.
That is,
\[
m((x,g)\in X\times G:n^{-\frac12}\phi_n(x,g)\in I)\to
\int_I \frac{1}{(2\pi)^{k/2}(\det\Sigma)^{1/2}}\exp\{-\frac12 y^T\Sigma^{-1}y\}\,dy,
\]
as $n\to\infty$, for every open rectangle $I\subset\R^d$.
\end{thm}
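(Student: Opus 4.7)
The plan is to reduce to Theorem~\ref{thm-CLT}, which asserts that whenever a certain derived observable $V^*$ associated to an equivariant $\phi$ lies in $L^2(m)$, the CLT (and even the WIP) holds for $\phi_n$ with a $G$-equivariant limiting covariance. So the only work specific to the intermittency family~\eqref{eq-LSV} is to verify this $L^2$ condition when $\gamma\in(0,\frac12)$.

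First I would unwind the Birkhoff sum as $\phi_n(x,g) = g\cdot\sum_{j=0}^{n-1} H_j(x)\,v(f^j x)$, where $H_j(x) = h(x)h(fx)\cdots h(f^{j-1}x)$ and $H_0=e$. Factoring out the constant matrix $g$ shows that the whole statistical content of $\phi_n$ sits in the $G$-twisted sum over the base dynamics $f$. The derived observable $V^*$ demanded by Theorem~\ref{thm-CLT} is essentially a Gordin-type series $\sum_{n\ge 1} P^n\phi$ built from the transfer operator $P$ of $f_h$ relative to $m$; the condition $V^*\in L^2(m)$ is exactly what is needed to realise the martingale--coboundary decomposition $\phi = \psi + V^*\circ f_h - V^*$ on which the CLT and WIP rest.

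To verify $V^*\in L^2(m)$ in our regime I would use that for $\gamma\in(0,\frac12)$ Hu's sharp bound $|\rho(n)|\le Cn^{-(1/\gamma-1)}$ is summable, since $1/\gamma-1>1$. Decomposing $L^2(X\times G)$ into $G$-isotypic components via Peter--Weyl, $\phi$ splits into vector-valued H\"older functions on $X$ on which $P$ acts through twisted transfer operators of $f$; summable decay of correlations on the base transfers to a summable bound on $\|P^n\phi\|_{L^2}$, so the series defining $V^*$ converges absolutely in $L^2(m)$. Equivalently one may lift $f$ to a Young tower with return-time tail $\mu(R>n)=O(n^{-1/\gamma})$; since $1/\gamma>2$ the return time is in $L^2$, which is the only input the standard Gordin construction needs on the induced skew product, and the conclusion descends to $f_h$.

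The $G$-invariance of the covariance is a soft consequence of the setup: $m=\mu\times\nu$ is invariant under the left $G$-translation $\tau_L(x,g)=(x,Lg)$, which commutes with $f_h$, while $\phi\circ\tau_L = L\phi$. Hence $n^{-1/2}\phi_n$ and $L\,n^{-1/2}\phi_n$ have the same $m$-distribution for every $L\in G$, so in the limit $Y$ and $LY$ are equidistributed for $Y\sim N(0,\Sigma)$, forcing $L\Sigma L^T=\Sigma$ and, since $G$ acts orthogonally, $L\Sigma=\Sigma L$. The only real obstacle I expect is the $L^2$ verification for $V^*$; in the present strongly chaotic regime it is mild because correlations are summable and one can arrange uniformity across $G$-isotypic components without losing summability, but the analogous step under non-summable decay is precisely what the rest of the paper must work to secure in the weakly chaotic regime $\gamma\in[\frac12,1)$ discussed in~\cite{GM13}.
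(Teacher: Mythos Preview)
Your ``equivalently'' clause is the paper's entire proof: for $\gamma<\tfrac12$ the return-time tail $\mu(r>n)\sim c\,n^{-1/\gamma}$ gives $r\in L^2$, and since $|V^*|\le |v|_\infty\,r$ pointwise one gets $V^*\in L^2$ for free; Theorem~\ref{thm-CLT} then delivers the CLT (and WIP). Your equivariance argument for $\Sigma$ via left $G$-translation is also correct.

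But the surrounding discussion contains a genuine confusion that would derail the argument if taken at face value. You identify $V^*$ with a Gordin series $\sum_{n\ge1}P^n\phi$ and with the coboundary in a martingale decomposition of $\phi$. It is neither: in the paper $V^*(y)=\max_{0\le\ell<r(y)}\bigl|\sum_{j=0}^{\ell}h_j(y)v(f^jy)\bigr|$ is the maximal induced partial sum on the inducing set $Y$, measuring fluctuation within a single excursion. The Gordin coboundary $\chi=\sum_{j\ge1}M_H^jV$ appears at a different stage (Section~\ref{sec-GM}) and is built from the twisted transfer operator of the \emph{induced} Gibbs--Markov map $F$, not from $P$ on $X\times G$. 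Relatedly, your primary verification route---that summable correlation decay for $f$ ``transfers'' via Peter--Weyl to a summable bound on $\|P^n\phi\|_{L^2}$---has a real gap: decay for $f$ controls only the trivial isotypic component, while for nontrivial representations you need decay of the \emph{twisted} transfer operators on $X$, which is not automatic since $f$ has no spectral gap (establishing it is roughly the content of~\cite{BHM05}). The paper sidesteps this entirely by inducing first and using only $r\in L^2$; so keep that sentence and discard the correlation-decay sketch.
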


\begin{rmk}[WIP]
In the situation of Theorem~\ref{thm-CLT_PM}, we also obtain the following weak invariance principle.  Define $W_n(t)=n^{-\frac12}\phi_{nt}$
for $t=0,\frac1n,\frac2n,\ldots$ and linearly interpolate to obtain
$W_n\in C([0,\infty),\R^d)$.   Then $W_n$ converges weakly to $W$
in $C([0,\infty),\R^d)$, denoted $W_n\to_w W$, where $W$ is $d$-dimensional Brownian motion with covariance matrix $\Sigma$.

Equivalently, for any $T>0$, $k\ge1$, and for any continuous function
$\chi:C([0,T],\R^k)\to\R^k$, we have that $\chi(W_n)\to_d\chi(W)$ as ordinary
$\R^k$-valued random variables (so $m(\chi(W_n)\in I)\to \PP(\chi(W)\in I)$
for any open rectangle $I\subset\R^k$).
Taking $T=1$, $k=d$ and $\chi(p)=p(1)$ we recover Theorem~\ref{thm-CLT_PM}, so the CLT is a special case of the WIP.
\end{rmk}

 \begin{rmk} \label{rmk-extra}    (a) The convergence here is in distribution with respect to the 
 probability measure $m$ on $X\times G$.    In fact, it follows from~\cite{Zweimueller07} that we obtain {\em strong} distributional convergence: convergence in distribution
 to $N(0,\Sigma)$
 holds for any probability measure that is absolutely continuous
 with respect to $m$.
The corresponding statement also holds for the WIP.
 
 \vspace{1ex}
 \noindent(b) By~\cite{MN04}, we obtain strong distributional convergence also with respect to the 
 measure $\mu\times\delta_{g_0}$ for $g_0\in G$ fixed.
 
 \vspace{1ex}
\noindent(c) The covariance matrix 
is typically nondegenerate.   (Again, the degenerate situation $\det\Sigma=0$ 
holds only on a closed subspace of infinite codimension in the space
of H\"older functions $v:X\to\R^d$~\cite{NMA01}.)
\end{rmk}

When $\gamma\in[\frac12,1)$ it is necessary to consider the 
values of the cocycle $h$ and the observable $v$ at the neutral fixed point $0$.
Let $\Fix g=\{w\in\R^d:gw=w\}$ for $g\in G$.  We have the orthogonal splitting
$\R^d=\Fix h(0)\oplus (\Fix h(0))^\perp$.

\begin{thm} \label{thm-suppress}
Let $\gamma\in[\frac12,1)$.
Suppose that $v:X\to\R^d$ and $h:X\to G$ are $\eta$-H\"older, where $\eta>\min\{0,\gamma-\frac12\}$.

If $v(0)\in(\Fix h(0))^\perp$, then
$n^{-\frac12}\phi_n\to_d N(0,\Sigma)$ 
where $\Sigma$ is a $d\times d$
covariance matrix satisfying $g\Sigma=\Sigma g$ for all $g\in G$.

Again the convergence is in the sense of strong distribution, $\Sigma$  is typically nondegenerate, and
the corresponding weak invariance principle holds. 
\end{thm}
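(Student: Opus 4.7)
The strategy is to deduce Theorem~\ref{thm-suppress} from the general result Theorem~\ref{thm-CLT}, whose hypothesis reduces to an $L^2$-integrability condition on the induced observable $V^*$. For the map~\eqref{eq-LSV} I would use the standard inducing set $Y=[\tfrac12,1]$ with $\tau$ the first return time of $f$ to $Y$; it is classical that $\mu_Y(\tau=n)\asymp n^{-(1/\gamma+1)}$ and that, for $x\in Y$ with $\tau(x)=n$, the orbit $\{f^j x\}_{j=1}^{n-1}$ stays in a shrinking neighbourhood of the neutral fixed point $0$ with $|f^j x|\lesssim j^{-1/\gamma}$. The induced observable naturally attached to $\phi(x,g)=g\cdot v(x)$ is
\[
V(x)=\sum_{j=0}^{\tau(x)-1} H_j(x)\,v(f^j x), \qquad H_j(x)=h(x)h(fx)\cdots h(f^{j-1}x),
\]
with $H_0(x)=I$; I expect $V$ to coincide with $V^*$ modulo a term that is automatically in $L^2$.

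The crucial algebraic step is the splitting $V=V_0+R$ with
\[
V_0(x)=\Bigl(\sum_{j=0}^{\tau(x)-1} h(0)^j\Bigr)v(0),
\]
obtained by freezing $h$ and $v$ at the neutral fixed point. Since $G$ acts orthogonally, $h(0)$ is an orthogonal transformation and $I-h(0)$ is invertible on the invariant subspace $(\Fix h(0))^\perp$; hence the hypothesis $v(0)\in(\Fix h(0))^\perp$ collapses the geometric sum to
\[
V_0(x)=(I-h(0))^{-1}\bigl(I-h(0)^{\tau(x)}\bigr)v(0),
\]
which is bounded uniformly in $x$, so $V_0\in L^\infty\subset L^2$. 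This is the precise mechanism by which the compact group symmetry suppresses the anomalous $n^\gamma$-growth that would otherwise survive near the neutral fixed point.

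For the remainder $R=V-V_0$ I would compare $H_j(x)\,v(f^j x)$ term by term with $h(0)^j v(0)$, splitting as $H_j(v(f^j x)-v(0))+(H_j-h(0)^j)v(0)$ and using H\"older regularity together with $|f^j x|\lesssim j^{-1/\gamma}$. Telescoping along the Lie group yields the noncommutative estimate $\|H_j(x)-h(0)^j\|\lesssim\sum_{k<j}k^{-\eta/\gamma}$, while $|v(f^j x)-v(0)|\lesssim j^{-\eta/\gamma}$; combining these bounds with the return-time tail $\mu_Y(\tau=n)\asymp n^{-(1/\gamma+1)}$ then yields $R\in L^2(Y)$ under the H\"older condition on $\eta$ stated in the theorem. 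Once $V^*\in L^2$ has been verified, Theorem~\ref{thm-CLT} delivers the CLT, the associated WIP, the strong distributional convergence, and the typical nondegeneracy of $\Sigma$, while $g\Sigma=\Sigma g$ is inherited from the equivariance of $\phi$ and the $G$-invariance of $m$.

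The main obstacle will be the nonabelian character of the cocycle products $H_j(x)$: each perturbation $h(f^k x)-h(0)$ propagates through every subsequent multiplication, so the noncommutative telescoping has to be balanced carefully against the available decay from the orbit estimate, and one must exploit the orthogonality $v(0)\in(\Fix h(0))^\perp$ not only in the frozen term $V_0$ but also when controlling how the error $R$ interacts with the subspace splitting. A secondary technical point will be to reconcile the induced observable $V$ introduced above with the exact $V^*$ appearing in Theorem~\ref{thm-CLT}, though the two should differ only by an $L^\infty$-bounded correction coming from the final group factor along each excursion.
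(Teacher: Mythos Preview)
Your overall strategy---reduce to Theorem~\ref{thm-CLT} by showing $V^*\in L^2$ via the splitting at the neutral fixed point---is exactly the paper's. But the estimate you propose for the error term $R$ is too crude, and this is precisely the point the paper flags in Remark~\ref{rmk-direct}.

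Concretely: your telescoping bound $\|H_j(y)-h(0)^j\|\lesssim\sum_{k<j}|h(f^ky)-h(0)|$ is correct, but when you sum $\sum_{j=0}^{n-1}(H_j-h(0)^j)v(0)$ in absolute value you get a double sum of order $n\cdot n^{1-\eta/\gamma}=n^{2-\eta/\gamma}$ on $\{r=n\}$. Combined with $\mu_Y(Z_n)\asymp n^{-(1+1/\gamma)}$ this yields $R\in L^2$ only when $\eta>2\gamma-\tfrac12$, which for $\gamma\ge\tfrac34$ is impossible even for Lipschitz data. You correctly sense the obstacle (``one must exploit the orthogonality\ldots not only in the frozen term $V_0$''), but the proposal does not say how.

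The missing idea is a resummation. The paper writes $h_j(y)=\bigl[\prod_{k=0}^{j-1}A_k(y)\bigr]h(0)^j$ with $A_k=h(0)^k\,\bar h(f^k\,\cdot\,)\,h(0)^{-k}$ and $\bar h=h\,h(0)^{-1}$, expands the product as $A_0h(0)^j+\sum_{k=1}^{j-1}\bigl[\prod_{i<k}A_i\bigr](A_k-I)h(0)^j$, and then \emph{swaps the order of summation} in $\sum_{j}\sum_{k<j}$. The inner sum becomes $\sum_{j>k}h(0)^jv(0)$, which is uniformly bounded by the hypothesis $v(0)\in(\Fix h(0))^\perp$. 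This collapses one factor of $n$ and gives $|V^*(y)|\ll n^{1-\eta/\gamma}$ on $Z_n$, hence $V^*\in L^2$ for all $\eta>\gamma-\tfrac12$. So the orthogonality condition must be used a second time, on the \emph{tail} sums $\sum_{j>k}h(0)^jv(0)$, not merely on the full frozen sum $V_0$.

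Two smaller corrections. First, the orbit estimate has the opposite orientation: for $y\in Z_n$ one has $|f^jy|\ll(n-j)^{-1/\gamma}$, not $j^{-1/\gamma}$ (the orbit starts near $0$ after one step and drifts outward). This does not change the order of the single sums but matters when you set up the resummation. Second, $V^*$ is by definition the maximum over partial sums $\max_{0\le\ell<r}\bigl|\sum_{j=0}^\ell h_j v(f^j\,\cdot\,)\bigr|$, not $V$ plus a bounded correction; the paper's bounds are arranged to hold uniformly in $\ell$, which is why they control $V^*$ directly.
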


The heuristic arguments in~\cite{GM13} generalise in the current context to
yield the following conjecture:

\begin{conj} \label{conj-stable_PM}
If $\gamma\in(\frac12,1)$ and $v(0)\not\in(\Fix h(0))^\perp$, then we conjecture that $n^{-\gamma}\phi_n$ converges in distribution to a
$d$-dimensional stable law of order $\alpha=1/\gamma$.

Similarly, if $\gamma=\frac12$ and $v(0)\not\in(\Fix h(0))^\perp$, then we conjecture that
$(n\log n)^{-\frac12}\phi_n$ converges in distribution to a
$d$-dimensional normal distribution.   
\end{conj}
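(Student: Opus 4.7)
The plan is to reduce the conjecture to a stable CLT for the first-return system. The heuristic is that long intermittent excursions near the neutral fixed point generate bursts of approximately $\tau g v_\parallel$ each (with $\tau$ the excursion length and $g$ the group element at the start of the excursion), and these heavy-tailed vector jumps produce the stable limit. Decompose $v(0) = v_\parallel + v_\perp$ with $v_\parallel \in \Fix h(0)$, $v_\perp \in (\Fix h(0))^\perp$, and split $v = (v - v_\parallel\mathbf{1}) + v_\parallel\mathbf{1}$. The first summand takes the value $v_\perp$ at $0$, so Theorem~\ref{thm-suppress} yields the CLT for its equivariant partial sums at rate $n^{1/2}$ (after subtracting any $G$-invariant constant to restore the mean-zero hypothesis). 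For $\gamma \in (\tfrac12, 1)$ this is $o(n^\gamma)$; for $\gamma = \tfrac12$ it is dominated by $(n \log n)^{1/2}$. Thus it suffices to treat $v \equiv v_\parallel$ constant, with $v_\parallel \in \Fix h(0) \setminus \{0\}$.

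Next, pass to the first-return map $F = f^\tau$ on $Y = [\tfrac12, 1]$, which is Gibbs--Markov with $\mu_Y(\tau > n) \sim C n^{-1/\gamma}$. Writing $H_j(y) = h(y) h(fy) \cdots h(f^{j-1}y)$, the induced observable for $\phi(x, g) = g v_\parallel$ on $Y \times G$ is $V(y, g) = g W(y)$ with $W(y) = \sum_{j=0}^{\tau(y)-1} H_j(y)\, v_\parallel$. Because $h(0) v_\parallel = v_\parallel$, the telescoping identity
\begin{equation*}
H_j(y) v_\parallel - H_{j-1}(y) v_\parallel = h(y) \cdots h(f^{j-2} y)\, [h(f^{j-1} y) - h(0)]\, v_\parallel, \qquad j \ge 2,
\end{equation*}
combined with $|h(x) - h(0)| \le C x^\eta$ gives $|H_j(y) v_\parallel - h(y) v_\parallel| \le C \sum_{k=1}^{j-1} y_k^\eta$ with $y_k = f^k y$. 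The standard LSV excursion bound $y_k \le C(\tau(y) - k)^{-1/\gamma}$ then yields
\begin{equation*}
W(y) = \tau(y)\, h(y)\, v_\parallel + R(y), \qquad |R(y)| \le C\, \tau(y)^{2 - \eta/\gamma},
\end{equation*}
with a logarithmic correction at $\eta = 2\gamma$ and $|R(y)| = O(1)$ when $\eta > 2\gamma$.

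Provided $\eta > \gamma$, the remainder $R$ lies in $L^p$ for some $p > \alpha := 1/\gamma$, while $\tau(y)\, g h(y) v_\parallel$ lies in $L^p$ only for $p < \alpha$ and is in the domain of attraction of a $d$-dimensional $\alpha$-stable law. Applying the vector-valued stable CLT for Gibbs--Markov systems (in the spirit of~\cite{Gouezel04, Tyran-Kaminska05}) to the Birkhoff sums of $V$ over $F_H$ gives convergence to a $d$-dimensional $\alpha$-stable law; mixing of $F_H$ forces the limiting spectral measure to be the push-forward of Haar measure on $G$ by $g \mapsto g v_\parallel$, so the limit is nondegenerate. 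Lifting from the induced system back to $f_h$ via the Melbourne--Zweim\"uller transfer theorem~\cite{MZapp} yields $n^{-\gamma} \phi_n \to_d$ the conjectured stable law. The $\gamma = \tfrac12$ case is analogous: $\tau \notin L^2$ but $\tau \in L^p$ for all $p < 2$, and the classical truncation argument at the boundary of the stable domain produces a Gaussian limit at scale $(n \log n)^{1/2}$.

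The main obstacle is the error bound in the key approximation. The naive estimate forces $\eta > \gamma$ for $R$ to have strictly lighter tails than the heavy-tailed main term $\tau h(y) v_\parallel$, which is more restrictive than the hypothesis $\eta > 0$ used in Theorem~\ref{thm-suppress}. Relaxing this threshold would require exploiting cancellation in the group drift along excursions --- essentially an \emph{averaging of rotations} effect --- which does not follow from pointwise H\"older estimates and appears genuinely delicate. A secondary difficulty is a careful treatment of the boundary case $\gamma = \tfrac12$, where the suppressed and heavy-tailed contributions differ only by a $\sqrt{\log n}$ factor, requiring finer control on the lower-order terms than in the strictly super-diffusive regime.
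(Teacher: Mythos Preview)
The statement you are attempting to prove is explicitly labelled a \emph{conjecture} in the paper (Conjecture~\ref{conj-stable_PM}); the authors state in the Introduction that their claims on anomalous diffusion from~\cite{GM13} remain conjectural, and the present paper establishes only the suppression results (Theorems~\ref{thm-CLT_PM} and~\ref{thm-suppress}). There is therefore no proof in the paper to compare your attempt against.

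Your outline captures the intended mechanism: long excursions produce vector jumps of approximate size $\tau(y)\,g h(y)v_\parallel$, and these should drive a stable limit. The decomposition $W(y)=\tau(y)h(y)v_\parallel+R(y)$ with $|R(y)|\ll\tau(y)^{2-\eta/\gamma}$ is correct (it is the ``direct estimate'' discussed in Remark~\ref{rmk-direct}), and you correctly identify the resulting restriction $\eta>\gamma$ as an obstacle. However, there is a more serious gap. The step where you ``apply the vector-valued stable CLT for Gibbs--Markov systems'' is not supported by the references you cite: Gou\"ezel's result~\cite{Gouezel04} is for scalar observables on a Gibbs--Markov base, and its proof via perturbed transfer operators does not carry over directly to equivariant observables $\Phi(y,g)=gW(y)$ on the group extension $Y\times G$. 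Establishing a multidimensional stable law here requires not only the right marginal tail asymptotics but also control on the joint behaviour of the magnitude $\tau$ and the direction $gh(y)v_\parallel/|v_\parallel|$ along orbits of $F_H$; the group coordinate evolves deterministically via $g\mapsto gH(y)$, so the claim that the spectral measure is the Haar push-forward under $g\mapsto gv_\parallel$ is plausible but does not follow from mixing alone. This missing ingredient --- a stable CLT for equivariant observables of compact group extensions --- is precisely why the paper leaves the statement open.
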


\begin{rmk}    As in~\cite[Section~4(a)]{FMT03}, our set up decomposes naturally into the cases where $G$ acts trivially on $\R^d$ and where $G$ acts fixed-point freely on $\R^d$
(so if $w\in\R^d$ and $g\cdot w=w$ for all $g\in G$, then $w=0$).
In the latter case, the condition
$\int_{X\times  G}\phi\,dm=0$ is automatically satisfied~\cite{NMA01}.
\end{rmk}

\subsection{Extensions of nonuniformly expanding maps}
\label{sec-Intro-NUE}

The intermittency maps~\eqref{eq-LSV} are examples of nonuniformly expanding maps.   This is a large class of dynamical systems that can be modelled by
Young towers~\cite{Young99} and whose statistical properties are well-understood.   The main result of this paper, Theorem~\ref{thm-CLT} below, gives very general conditions under which the CLT and WIP hold for equivariant observables of compact group extensions of such maps.

Let $(X,d)$ be a locally compact separable bounded metric space with
Borel probability measure $\mu_0$ and let $f:X\to X$ be a nonsingular
transformation for which $\mu_0$ is ergodic.
Let $Y\subset X$ be a measurable subset with $\mu_0(Y)>0$, and
let $\alpha$ be an at most countable measurable partition
of $Y$ with $\mu_0(a)>0$ for $a\in\alpha$.    Suppose that there is an $L^1$
{\em return time} function $r:Y\to\Z^+$, constant on each $a\in\alpha$,
and constants $\lambda>1$, $\eta\in(0,1]$, $C\ge1$
such that for each $a\in\alpha$,
\begin{itemize}
\item[(1)] $F=f^{r(a)}:a\to Y$ is a bijection with measurable inverse.
\item[(2)] $d(Fx,Fy)\ge \lambda d(x,y)$ for all $x,y\in a$.
\item[(3)] $d(f^\ell x,f^\ell y)\le Cd(Fx,Fy)$ for all $x,y\in a$,
$0\le \ell <r(a)$.
\item[(4)] $g_a=\frac{d(\mu_0|a\circ F^{-1})}{d\mu_0|_Y}$
satisfies $|\log g_a(x)-\log g_a(y)|\le Cd(x,y)^\eta$ for all
\mbox{$x,y\in Y$}.
\end{itemize}

\begin{rmk} \label{rmk-LSV}
	For the intermittency maps~\eqref{eq-LSV} a natural choice is
$Y=[\frac12,1]$.    Conditions (1)--(4) are valid for all $\gamma\ge0$ and
the condition that $r$ is integrable holds if and only if $\gamma\in[0,1)$.
\end{rmk}

Such a dynamical system $f:X\to X$ is called {\em nonuniformly expanding}.
There is a unique $f$-invariant probability measure $\mu$ on $X$ equivalent
to $\mu_0$ (see for example~\cite[Theorem~1]{Young99}).

As before, we consider compact group extensions
$f_h:X\times G\to X\times G$,
$f_h(x,g)=(fx,gh(x))$.   Again, the invariant product measure $m=\mu\times\nu$
is assumed to be ergodic.
Let $\phi:X\times G\to\R^d$ be an observable of the form $\phi(x,g)=g\cdot v(x)$ 
where $v:X\to\R^d$ and $G$ acts orthogonally on $\R^d$.  

To study the statistical properties of the observable $\phi$, we follow the 
standard approach of {\em inducing} where we pass from the nonuniformly expanding map $f:X\to X$ (and its group extension on $X\times G$) to the uniformly expanding map $F=f^r:Y\to Y$ (and its group extension on $Y\times G$).
There is a trade-off between the improvement of $f$ and the deterioration
of the cocycle $h:X\to G$ and observable $\phi:X\times G\to\R^d$, stemming from the possibility that the return time function $r$ may be large.
Hence it is
necessary to consider an induced cocycle $H:Y\to G$ and 
an induced observable $\Phi:Y\times G\to\R^d$ which incorporate this information
(see Section~\ref{sec-NUE}).   

To state our main result,
it suffices to introduce induced versions of the function $v:X\to\R^d$.
Define $V,\,V^*:Y\to\R^d$,
\[
V(y)=\sum_{j=0}^{r(y)-1}h_j(y)v(f^jy), \quad
V^*(y)=\max_{0\le \ell<r(y)}\Bigl|\sum_{j=0}^{\ell}h_j(y)v(f^jy)\Bigr|,
\]
where $h_j(y)=h(y)\cdots h(f^{j-1}y)$.
Note that if $h$ is measurable and $v\in L^\infty$, then  $r\in L^p$ implies that $V$ and $V^*$ lie in $L^p$.

\begin{thm}  \label{thm-CLT}
Suppose that $f:X\to X$ is nonuniformly expanding and that $r:Y\to\Z^+$ is constant on partition elements.
Suppose further that $v:X\to\R^d$ and $h:X\to G$ are uniformly H\"older.

If $r\in L^p$ for some $p>1$ and $V\in L^2$, then the CLT holds for $\phi$.
If moreover $V^*\in L^2$, then the WIP holds for $\phi$.
In particular, this is the case if $r\in L^2$.

The additional conclusions in Remark~\ref{rmk-extra} are again applicable.
\end{thm}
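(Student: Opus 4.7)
My plan is to induce to the first return map $F = f^r : Y \to Y$, apply the CLT and WIP for equivariant observables of compact group extensions of Gibbs--Markov maps, and transfer back to $f_h$ via the standard inducing machinery. Conditions (1)--(4) make $F$, together with the partition $\alpha$, a Gibbs--Markov map with respect to $\mu_Y = \mu|_Y / \mu(Y)$. The induced cocycle $H(y) = h(y)h(fy) \cdots h(f^{r(y)-1}y)$ is uniformly H\"older on each $a \in \alpha$ (because $h$ is H\"older and $r$ is constant on $a$), and a direct telescoping calculation yields the induced observable
\[
\Phi(y,g) = \sum_{j=0}^{r(y)-1} \phi(f_h^j(y,g)) = g \cdot V(y),
\]
which is again equivariant. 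Thus $F_H(y,g) = (Fy, gH(y))$ is an ergodic Gibbs--Markov group extension with invariant measure $\mu_Y \times \nu$, and the problem for $\phi$ reduces to an analogous one for $\Phi$.

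First I would derive the CLT and WIP for $\Phi$ on $Y \times G$. The representation-theoretic framework of~\cite{NMA01,FMT03,MN04,MN04b} reduces the equivariant problem, by decomposing $\R^d$ into irreducible $G$-subrepresentations, to scalar limit theorems for the Gibbs--Markov map $F$ with observables twisted by the corresponding characters of $G$. The hypothesis $V \in L^2$ then furnishes the CLT for $\Phi$ with a $G$-equivariant covariance $\Sigma$, and the stronger hypothesis $V^* \in L^2$ provides, via a Doob-type maximal inequality on partial sums of $\Phi$, the tightness of the rescaled interpolated process in $C([0,\infty), \R^d)$ and hence the WIP for $\Phi$.

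Next I would transfer from $\Phi$ to $\phi$. Setting $r_k = \sum_{i=0}^{k-1} r \circ F^i$, a telescope gives $\phi_{r_k}(y,g) = \Phi_k(y,g)$ along the subsequence of return times, and the Birkhoff ergodic theorem $r_k/k \to 1/\mu(Y)$ (valid since $r \in L^1$) lets one switch from index $k$ to index $n \approx k/\mu(Y)$. The hypothesis $r \in L^p$ with $p > 1$ controls the error in this random time change and yields the CLT for $\phi_n$, following a Melbourne--T\"or\"ok/Melbourne--Zweim\"uller style argument. For the WIP one must additionally control the fluctuation $\phi_n - \phi_{r_{k(n)}}$, which is the maximum of partial sums within a single excursion of $f_h$ between consecutive returns to $Y \times G$; this is pointwise bounded by $V^* \circ F^{k(n)}$, and $V^* \in L^2$ is precisely what allows a maximal estimate to show this fluctuation is negligible after normalisation by $n^{1/2}$. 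The ``in particular'' clause is immediate from $|V| \le V^* \le \|v\|_\infty\, r$, which uses only that $G$ acts orthogonally.

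The main obstacle in the proof is the passage from CLT to WIP --- that is, from $V \in L^2$ to $V^* \in L^2$. The induced observable $\Phi$ itself behaves nicely, and the Gibbs--Markov CLT/WIP for $\Phi$ is essentially off-the-shelf once $V^* \in L^2$; but the additional fluctuations of $\phi_n$ strictly between returns are exactly captured by $V^*$. Introducing $V^*$ thus cleanly separates the tightness issue from the rest of the argument, and leaves the verification of $V^* \in L^2$ --- for instance in the intermittent setting of~\cite{GM13} where $r \notin L^2$ and the crude bound $V^* \le \|v\|_\infty\, r$ is wasteful --- as the only genuinely nontrivial step, to be addressed separately using the equivariance of $\phi$ and the behaviour of $h$ near the neutral fixed point.
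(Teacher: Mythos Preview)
Your approach is essentially the paper's: induce to the Gibbs--Markov extension $F_H$ on $Y\times G$, prove the CLT/WIP for the induced observable $\Phi=g\cdot V$, and transfer back via the Melbourne--Zweim\"uller inducing lemma. Two points need correcting, however.

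First, you misplace the role of $V^*$. At the induced level, the WIP for $\Phi$ follows from $V\in L^2$ alone (together with the local H\"older estimates on $V$ and $H$ coming from $r\in L^p$, $p>1$): the paper's Theorem~\ref{thm-CLT_GM} gives a Gordin martingale--coboundary decomposition $\Phi=\hat\Phi+(g\cdot\chi)\circ F_H-g\cdot\chi$ with $\hat\Phi\in L^2$ and $\chi\in L^\infty$, obtained from spectral properties of the twisted transfer operator $M_H$, and the martingale WIP is then automatic --- no Doob-type maximal inequality on $V^*$ is invoked. The hypothesis $V^*\in L^2$ enters \emph{only} in the transfer step, exactly as in your second use of it, to control the partial sums within a single excursion (this is the $\Psi\in L^2$ condition in the paper's Theorem~\ref{thm-MZ}).

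Second, you gloss over a genuine technical obstruction: the return time $r:Y\to\Z^+$ in the nonuniformly expanding setup is in general \emph{not} the first return time to $Y$, so $r:Y\times G\to\Z^+$ is not the first return to $Y\times G$ and the inducing lemma cannot be applied directly to $f_h:X\times G\to X\times G$. The paper circumvents this by building the Young tower $\Delta\times G$ over $Y\times G$ with roof function $r$; on the tower, $r$ \emph{is} the first return, and the observable $\phi$ lifts to $\hat\phi=\phi\circ\pi$ with the same distribution of Birkhoff sums. Your telescope $\phi_{r_k}=\Phi_k$ and the ergodic-theorem time change are correct once one works on the tower.
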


For the intermittency maps~\eqref{eq-LSV}, it is well-known that $r\in L^2$
if and only if $\gamma\in[0,\frac12)$.  Hence Theorem~\ref{thm-CLT_PM}
is an immediate consequence of Theorem~\ref{thm-CLT}.   
For $\gamma\in[\frac12,1)$ it is still the case that $r\in L^p$ for some
$p>1$, so given Theorem~\ref{thm-CLT} it suffices to verify that $V^*\in L^2$ in order to prove Theorem~\ref{thm-suppress}.

\subsection{Application to Euclidean group extensions}
\label{sec-euc}

The work in this paper was motivated by questions related to Euclidean symmetry that we raised in~\cite{GM13}.  
 In particular, Theorem~\ref{thm-suppress} answers one of the main questions in~\cite{GM13} as we now explain.

We say that $\Gamma$ is a Euclidean-type group if $\Gamma$ is 
a semidirect product $\Gamma=G\ltimes\R^d$ where $G$ is a connected closed
subgroup of $\SO(d)$, the group of $d\times d$ orthogonal matrices.
It is assumed that the group multiplication is given by
$(g_1,p_1)\cdot(g_2,p_2)=(g_1g_2,p_1+g_1p_2)$ where
$g_1p_2$ is matrix multiplication.
When $G=\SO(d)$ this is the $d$-dimensional Euclidean group.

Consider the noncompact group extension $f_\xi:X\times \Gamma\to X\times\Gamma$,
\[
f_\xi(x,\gamma)=(fx,\gamma\xi(x)),
\]
where $\xi:X\to\Gamma$ is a measurable cocycle.   
Write $\gamma=(g,p)$ where $g\in G$, $p\in\R^d$.
Similarly, write $\xi=(h,v)$ where $h:X\to G$, $v:X\to\R^d$.
Then the group extension becomes
\[
f_\xi(x,\gamma)=(fx,gh(x),p+gv(x))=(f_h(x,g),p+\phi(x,g)),
\]
where $\phi(x,g)=gv(x)$.   In particular, as noted in~\cite{NMA01}, 
the noncompact part
of the dynamics is governed by the statistical properties of
the equivariant observable $\phi:X\times G\to\R^d$.

Let $\T$ denote the maximal torus in $G$, with fixed point space
$\Fix(\T)=\{v\in\R^d:gv=v\,\text{for all}\,g\in\T\}$.
Typically $h(0)$ generates a maximal torus.  Hence if 
$\Fix(\T)=\{0\}$, then typically $\Fix(h(0))=\{0\}$  so that the 
hypothesis $v(0)\in(\Fix(h(0))^\perp$ is automatically satisfied.
For $v$, $h$ H\"older, our main results imply that superdiffusion is 
typically suppressed for such Euclidean-type groups.   

On the other hand, if $\Fix(\T)\neq\{0\}$, then typically $v(0)\not\in(\Fix(h(0))^\perp$ and superdiffusive behaviour is conjectured.

In the special case of the Euclidean group $\Gamma=\SO(d)\ltimes\R^d$
we have $\Fix(\T)=\{0\}$ if and only if $d$ is even.   In~\cite{GM13}
we gave heuristic arguments, supported by numerics, for suppression of 
superdiffusion in even dimensions and existence of superdiffusion in odd dimensions.   
The claim that superdiffusion is typically suppressed in even dimensions is
a consequence of Theorem~\ref{thm-suppress}.
The claim about existence of superdiffusion in odd dimensions is
a 
special case of Conjecture~\ref{conj-stable_PM}.

\begin{rmk}
Suppose that $d$ is even.   The action of $G=\SO(d)$ on $\R^d$ is irreducible, so the property $g\Sigma=\Sigma g$ for $g\in G$ implies that $\Sigma=\sigma^2 I_d$ for some $\sigma>0$.   (Typically $\sigma>0$.)   

Similarly, for $d$ odd the conjectured limits in Conjecture~\ref{conj-stable_PM}
are symmetric.
\end{rmk}

\vspace{1ex}
The structure of the remainder of the paper is as follows.
In Section~\ref{sec-GM}, we prove the CLT and WIP for group extensions of a class of 
uniformly expanding maps called Gibbs-Markov maps.
In Section~\ref{sec-NUE}, we use the result in Section~\ref{sec-GM} to prove
Theorem~\ref{thm-CLT}.
In Section~\ref{sec-PM}, we show that Theorem~\ref{thm-suppress} follows from
Theorem~\ref{thm-CLT} by verifying that $V^*\in L^2$.

The argument in Section~\ref{sec-NUE} relies on the method of inducing statistical limit laws, which is by now standard for the CLT.  The corresponding result for the 
WIP is a special case of~\cite{MZapp} (where the focus is on the superdiffusive case) but 
the method simplifies significantly in the situation of this paper.  
Hence we have included the required special case
of~\cite{MZapp} in Appendix~\ref{sec-MZ}.

\paragraph{Notation}
We use ``big O'' and $\ll$ notation interchangeably, writing
$a_n=O(b_n)$ or $a_n\ll b_n$ as $n\to\infty$ if there is a constant
$C>0$ such that $a_n\le Cb_n$ for all $n\ge1$.

\section{Central limit theorems for group extensions of Gibbs-Markov maps}
\label{sec-GM}

Suppose that $(Y,\mu)$ is a probability space, and that
$\alpha$ is a countable measurable partition of $Y$.
Let $F:Y\to Y$ be an ergodic measure-preserving map.
It is assumed that the partition $\alpha$ separates orbits of $F$
and that $F|_a:a\to Y$ is a bijection for each $a\in\alpha$.
If $a_0,\dots,a_{n-1}\in\alpha$, we define the $n$-cylinder
$[a_0,\dots,a_{n-1}]=\cap_{i=0}^{n-1}F^{-i}a_i$.
Fix $\theta\in(0,1)$ and define $d_\theta(x,y)=\theta^{s(x,y)}$
where the {\em separation time} $s(x,y)$ is the greatest integer $n\ge0$
such that $x$ and $y$ lie in the same $n$-cylinder.

An observable $V:Y\to\R^d$ is {\em Lipschitz} if 
$\|V\|_\theta=|V|_\infty+|V|_\theta<\infty$ where
$|V|_\theta=\sup_{x\neq y}|V(x)-V(y)|/d_\theta(x,y)$.  The space
$F_\theta(Y,\R^d)$ of Lipschitz observables is a Banach space.
More generally we say that an observable $V:Y\to\R^d$ is {\em locally
Lipschitz}, $V\in F_\theta^{\rm loc}(Y,\R^d)$,
if $V|_a\in F_\theta(Y,\R^d)$ for each $a\in\alpha$.
Accordingly, we define 
$D_\theta V(a)=\sup_{x,y\in a:x\neq y}|V(x)-V(y)|/d_\theta(x,y)$.  

Define the potential function $p=\log\frac{d\mu}{d\mu\circ F}:Y\to\R$
and assume that 
$p\in F_\theta^{\rm loc}(Y,\R)$ and moreover that $\sup_a D_\theta p(a)<\infty$.
In particular, $F:Y\to Y$ is {\em Gibbs-Markov}~\cite{Aaronson}.

Let $\alpha_n$ denote the partition of $Y$ into $n$-cylinders.
Also let $q=e^p$ and $q_n=q\,q\circ F\cdots q\circ F^{n-1}$.
Gibbs-Markov maps have the property that there 
exists a constant $D>0$ such that for all $n\ge1$, $a\in\alpha_n$ and $y,y'\in a$, 
\begin{align}  \label{eq-GM}
q_n(y)\le D\mu(a), \quad\text{and}\quad
|q_n(y)-q_n(y')|\le D\mu(a)d_\theta(F^ny,F^ny').
\end{align}

Next let $G$ be a compact connected Lie group acting orthogonally on $\R^d$.  
Given a measurable cocycle $H:Y\to G$, we define the 
$G$-extension $F_H:Y\times G\to Y\times G$, $F_H(y,g)=(Fy,gH(y))$
with invariant measure $m=\mu\times\nu$ (recall that $\nu$ is Haar measure on $G$).
The Euclidean metric on $\R^{d\times d}$ restricts to a pseudometric on $G$
and we can speak of locally Lipschitz cocycles $H\in F_\theta^{\rm loc}(Y,G)$.

\begin{thm} \label{thm-CLT_GM}
Let $\theta\in(0,1)$, $\epsilon\in(0,1]$.
Suppose that $F_H:Y\times G\to Y\times G$ is an ergodic $G$-extension
of a Gibbs-Markov map $F:Y\to Y$ by a locally Lipschitz cocycle 
$H\in F_{\theta^{1/\epsilon}}^{\rm loc}(Y,G)$.
Let $V\in L^2\cap F_\theta^{\rm loc}(Y,\R^d)$,
and define the equivariant observable $\Phi(y,g)=g\cdot V(y)$.
Suppose that $\int_{Y\times G}\Phi\,dm=0$ and
define $\Phi_n=\sum_{j=0}^{n-1}\Phi\circ F_H^j$.

Assume that
\begin{itemize}
\item[(i)] $\sum_{a\in\alpha}\mu(a)|1_aV|_\infty<\infty$.
\item[(ii)]
	$\sum_{a\in\alpha}\mu(a)(D_{\theta^{1/\epsilon}}V(a))^\epsilon(1+|1_aV|_\infty)<\infty$.
\item[(iii)]
	$\sum_{a\in\alpha}\mu(a)(D_{\theta^{1/\epsilon}}H(a))^\epsilon(1+|1_aV|_\infty)<\infty$.
\end{itemize}

Then the limit $\Sigma=\lim_{n\to\infty}\frac1n \int_{Y\times G} \Phi_n \Phi_n^T\,dm$
exists, $g\Sigma=\Sigma g$ for all $g\in G$, and
$\frac{1}{\sqrt n}\Phi_n\to_d N(0,\Sigma)$.

Moreover, if we 
define $W_n(t)=n^{-\frac12}\Phi_{nt}$
for $t=0,\frac1n,\frac2n,\ldots$ and linearly interpolate to obtain
$W_n\in C([0,\infty),\R^d)$,   then $W_n\to_wW$
in $C([0,\infty),\R^d)$ where $W$ is $d$-dimensional Brownian motion with covariance matrix $\Sigma$.
\end{thm}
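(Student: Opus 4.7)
The plan is a Gordin-style martingale-coboundary decomposition, carried out on the base $Y$ rather than on the product space $Y\times G$. Let $\hat L$ denote the transfer operator of $F_H$ with respect to $m$. Equivariance of $\Phi(y,g)=g\cdot V(y)$ together with the cocycle property gives, by direct computation,
\[
\hat L\Phi(y,g) = g\cdot (L_H V)(y),\qquad (L_H W)(y):=\sum_{Fz=y} q(z)\,H(z)^{-1}W(z),
\]
and by induction $\hat L^n\Phi(y,g)=g\cdot(L_H^n V)(y)$. The whole problem therefore reduces to proving that $\chi_0:=\sum_{n\ge 1} L_H^n V$ converges in $L^2(Y,\R^d)$: given this, $\chi(y,g):=g\chi_0(y)$ lies in $L^2(Y\times G,\R^d)$ and satisfies $\Phi = M + \chi\circ F_H - \chi$ with $\hat L M = 0$, so that $M\circ F_H^j$ is a stationary ergodic martingale-difference sequence for the decreasing filtration $\mathcal G_j=F_H^{-j}\mathcal B$, and $\Phi_n = M_n + \chi\circ F_H^n - \chi$ with $M_n=\sum_{j=0}^{n-1}M\circ F_H^j$.

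Geometric decay of $L_H^n V$ in $L^2$ is the technical heart. Since each $H(z)^{-1}$ acts orthogonally and $\sum_{Fz=y}q(z)=1$, the operator $L_H$ is an $L^\infty$-contraction. The next goal is a Doeblin--Fortet--Lasota--Yorke estimate
\[
|L_H^n W|_\theta \le C\rho^n |W|_\theta + C\|W\|_\infty,\quad \|L_H^n W\|_\infty\le\|W\|_\infty,\qquad \rho<1,
\]
on the Banach space of globally $d_\theta$-Lipschitz maps $W:Y\to\R^d$. The Lipschitz estimate rests on the Gibbs--Markov distortion bound (2.1). Although $V$ and $H$ are only locally Lipschitz on partition elements with possibly unbounded local seminorms, one application of $L_H$ smooths them into a globally Lipschitz function precisely when (i)--(iii) hold; the $\epsilon$-th powers in (ii)--(iii) come from H\"older interpolation between the trivial uniform bound and the expensive full Lipschitz bound on each partition element. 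Compact embedding of the Lipschitz space into $L^\infty$ then yields quasi-compactness of $L_H$ via Ionescu-Tulcea--Marinescu. Peripheral eigenvalues are ruled out as follows: any nonzero $W\in F_\theta$ with $L_H W=\lambda W$, $|\lambda|=1$, yields $\hat L(gW)=\lambda(gW)$, and exactness of the ergodic Gibbs--Markov skew product $F_H$ forces $\lambda=1$ with $W$ constant in $\Fix(G)$; the hypothesis $\int\Phi\,dm=P_G\int V\,d\mu=0$ eliminates this constant sector, giving geometric decay $\|L_H^n V\|_\theta\le C\rho^n$.

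With the decomposition $\Phi_n = M_n + \chi\circ F_H^n - \chi$ in hand, the CLT $n^{-1/2}\Phi_n\to_d N(0,\Sigma)$ follows from the stationary ergodic martingale CLT applied to $M_n$, together with $n^{-1/2}\|\chi\circ F_H^n-\chi\|_2\to 0$. The WIP follows from the martingale invariance principle combined with the estimate $n^{-1/2}\max_{k\le n}|\chi\circ F_H^k|\to 0$ in probability, which is immediate from $\chi\in L^2$ by a union bound plus dominated convergence. The equivariance $g\Sigma=\Sigma g$ is inherited from $\Phi(y,g_1 g)=g_1\Phi(y,g)$ and the $G$-invariance of $m$. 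The principal obstacle is the Lasota--Yorke estimate, which under the loose hypotheses (i)--(iii) requires careful bookkeeping of how the Gibbs--Markov distortion interacts with the possibly unbounded partitionwise regularity of $V$ and $H$; the interpolation exponent $\epsilon$ is precisely the flexibility needed to close this estimate.
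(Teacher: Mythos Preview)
Your overall architecture---twisted transfer operator $L_H$, a Lasota--Yorke inequality yielding quasi-compactness, the observation that one application of $L_H$ maps $V$ into the global Lipschitz space under (i)--(iii), and then a Gordin martingale--coboundary decomposition---is exactly the paper's strategy. The coboundary $\chi$ actually lies in $F_\theta\subset L^\infty$, not merely $L^2$, but this only helps.

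There is, however, a genuine gap in your treatment of peripheral spectrum. You assert that ``exactness of the ergodic Gibbs--Markov skew product $F_H$'' forces any unimodular eigenvalue of $L_H$ to be $1$. But the theorem assumes only that $F_H$ is \emph{ergodic}, and ergodic compact group extensions of exact maps need not be weak-mixing, let alone exact. For a concrete counterexample, take $G=\SO(2)$ acting on $\R^2\cong\mathbb C$, and let $H\equiv R_\omega$ be a constant rotation with $\omega/2\pi$ irrational. Then $F_H$ is ergodic (since $F$ is mixing), yet the constant function $V\equiv 1$ satisfies $L_H V=e^{-i\omega}V$, so $L_H$ has a peripheral eigenvalue $e^{-i\omega}\neq 1$ and your series $\sum_{n\ge1}L_H^nV$ diverges.

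The paper repairs this as follows. Since $L_H$ (hence $\hat L$) is an $L^\infty$-contraction, there are no generalised eigenvectors on the unit circle, and any peripheral eigenfunction $\Phi_\ell$ of $\hat L$ with eigenvalue $e^{i\omega_\ell}$ is in fact a Koopman eigenfunction: $\Phi_\ell\circ F_H=e^{-i\omega_\ell}\Phi_\ell$. Ergodicity rules out $\omega_\ell=0$ once mean zero is imposed, and for $\omega_\ell\in(0,2\pi)$ the ergodic sums $\sum_{j=0}^{n-1}\Phi_\ell\circ F_H^j$ are uniformly bounded by $2|e^{i\omega_\ell}-1|^{-1}|\Phi_\ell|_\infty$, hence negligible after dividing by $\sqrt n$. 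One then applies the martingale argument to the remaining component $\Phi_0$ on which $L_H$ has spectral radius strictly less than~$1$.
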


\begin{rmk}   \label{rmk--CLT_GM}
By Cauchy-Schwarz, the regularity hypotheses $V\in L^2$ and conditions (i)--(iii) are satisfied provided (1) $\sum \mu(a)|1_aV|_\infty^2<\infty$,
(2) $\sum_{a\in\alpha}\mu(a)(D_{\theta^{1/\epsilon}}V(a))^{2\epsilon}<\infty$,
and (3) $\sum_{a\in\alpha}\mu(a)(D_{\theta^{1/\epsilon}}H(a))^{2\epsilon}<\infty$.
\end{rmk}

In the remainder of this section, we prove this result.
Let $L$ denote the transfer operator for $F_H:Y\times G\to Y\times G$
(so $\int_{Y\times G}Lv\,w\,dm=\int_{Y\times G}v\,w\circ F_H\,dm$).
Similarly, let $M$ denote the transfer operator for $F:Y\to Y$.
Let $M_H$ denote the twisted transfer operator, $M_HV=M(H^{-1}\cdot V)$.
In the following result (and throughout the paper) $\Phi=g\cdot V$ is shorthand for $\Phi(y,g)=g\cdot V(y)$ and so on.
\begin{prop} \label{prop-L}
Let $V\in L^1(Y,\R^d)$.
If $\Phi=g\cdot V$, then $L\Phi=g\cdot M_HV$.
\end{prop}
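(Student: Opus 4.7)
The plan is to verify the identity by duality, showing that both sides pair with every scalar test function in the same way. Fix a bounded measurable $w:Y\times G\to\R$. By the defining relation of the transfer operator $L$ together with Fubini,
\[
\int_{Y\times G}(L\Phi)\,w\,dm \;=\; \int_{Y\times G}\Phi\,(w\circ F_H)\,dm
\;=\; \int_Y\!\int_G \bigl(g\cdot V(y)\bigr)\,w(Fy,gH(y))\,d\nu(g)\,d\mu(y).
\]

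The next step is to absorb the cocycle using the (left-)invariance of Haar measure $\nu$. For each fixed $y\in Y$, substitute $g'=gH(y)$ in the inner $G$-integral, so that $g=g'H(y)^{-1}$ and $d\nu(g)=d\nu(g')$. The expression becomes
\[
\int_Y\!\int_G \bigl(g'\cdot H(y)^{-1}V(y)\bigr)\,w(Fy,g')\,d\nu(g')\,d\mu(y).
\]

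Finally, I would interchange the order of integration and, for each fixed $g'\in G$, apply the duality $\int_Y u\cdot (w\circ F)\,d\mu=\int_Y (Mu)\,w\,d\mu$ to the $\R^d$-valued function $u(y)=g'\cdot H(y)^{-1}V(y)$. Since $g'$ is independent of $y$ and $M$ is applied componentwise to $\R^d$-valued functions, it commutes with the linear action of $g'$, giving $Mu=g'\cdot M(H^{-1}V)=g'\cdot M_HV$ by the definition of the twisted transfer operator. Reassembling yields
\[
\int_{Y\times G}(L\Phi)\,w\,dm \;=\; \int_{Y\times G}\bigl(g\cdot M_HV(y)\bigr)\,w(y,g)\,dm,
\]
and as $w$ was arbitrary we conclude $L\Phi(y,g)=g\cdot M_HV(y)$ almost everywhere. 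The only non-mechanical ingredient is the Haar substitution converting the twisted fibre-action $g\mapsto gH(y)$ into an ordinary translation under $\nu$; once that is in place the identity reduces to linearity of $M$ and I anticipate no substantive obstacle.
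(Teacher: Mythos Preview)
Your argument is correct, and it takes a somewhat different route from the paper's. The paper tests against $\R^d$-valued observables and, via the Peter--Weyl theorem, reduces to equivariant test functions $\Psi=g\cdot W$; the $G$-invariance of the inner product then cancels the $g$ directly, so that Haar invariance is never invoked explicitly. Your approach instead tests against scalar $w$ and uses the translation-invariance of Haar measure to eliminate the cocycle in the fibre variable. Your route is more elementary in that it avoids Peter--Weyl and the orthogonality relations; the paper's route is more in keeping with the equivariant formalism used throughout.

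One small terminological point: the substitution $g'=gH(y)$ is a right translation, so what you need is right-invariance of $\nu$, not left-invariance. Since $G$ is compact, Haar measure is bi-invariant and the distinction is harmless, but it is worth stating correctly.
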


\begin{proof}
Let $\langle\,,\,\rangle$ denote a $G$-invariant inner product on $\R^d$.
The operator $L:L^1(Y\times G,\R^d)\to L^1(Y\times G,\R^d)$ is defined by the relation
$\int_{Y\times G}\langle L\Phi,\Psi\rangle\,dm  =
\int_{Y\times G}\langle \Phi,\Psi\circ F_H\rangle\,dm$
for all $\Psi\in L^\infty(Y\times G,\R^d)$.   By the Peter-Weyl theorem
and the orthogonality relations for compact groups~\cite{BtD}, we can suppose without loss that
$\Psi=g\cdot W$, $W\in L^\infty(Y,\R^d)$.  Hence,
\begin{align*}
\int_{Y\times G}\langle L\Phi,\Psi\rangle\,dm & =
\int_{Y\times G}\langle \Phi,\Psi\circ F_H\rangle\,dm=
\int_{Y\times G}\langle g\cdot V,gH\cdot W\circ F\rangle\,dm \\ & =
\int_{Y}\langle  V,H\cdot W\circ F\rangle\,d\mu =
\int_{Y}\langle H^{-1}\cdot V,W\circ F\rangle\,d\mu \\ & =
\int_{Y}\langle M_H V,W\rangle\,d\mu=
\int_{Y\times G}\langle g\cdot M_H V,\Psi\rangle\,dm.
\end{align*}
The result follows.
\end{proof}

The next strange-looking result is surprisingly useful.
\begin{prop}  \label{prop-strange}
Suppose that $x,a,b\ge0$ and $x\le a$, $x\le b$.  Then
$x\le (1+a)b^\epsilon$ for all $\epsilon\in(0,1]$.
If in addition $a\ge1$, then $x\le ab^\epsilon$ for all $\epsilon\in(0,1]$.
\end{prop}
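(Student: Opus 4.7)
The plan is to prove both inequalities by a simple case split on whether $b\ge 1$ or $b<1$, exploiting the two monotonicity properties of $t\mapsto t^\epsilon$: namely $b^\epsilon\ge 1$ when $b\ge 1$, and $b^\epsilon\ge b$ when $b\le 1$ (both valid for $\epsilon\in(0,1]$). In other words, $b^\epsilon\ge\min(1,b)$ always, and this is exactly the fact that will bridge the gap between the hypothesis $x\le b$ (useful when $b$ is small) and the hypothesis $x\le a$ (useful when $b$ is large, where $b^\epsilon$ gives no good bound).

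For the first statement, suppose first that $b\ge 1$. Then $b^\epsilon\ge 1$, so $(1+a)b^\epsilon\ge 1+a\ge a\ge x$. Suppose instead that $b\le 1$. Then $b^\epsilon\ge b$, so $(1+a)b^\epsilon\ge b^\epsilon\ge b\ge x$. This covers all cases.

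For the second statement, assume additionally $a\ge 1$. If $b\ge 1$, then $ab^\epsilon\ge a\ge x$. If $b\le 1$, then $ab^\epsilon\ge b^\epsilon\ge b\ge x$, where the first inequality uses $a\ge 1$. Again both cases are handled.

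There is no serious obstacle here — the statement is essentially a packaging of the two elementary monotonicity properties of $t\mapsto t^\epsilon$ for $\epsilon\in(0,1]$, and the only subtlety is that one cannot simply write $x\le b^\epsilon$ (which would fail for large $b$), so the factor $1+a$ or $a$ is needed to absorb the ``$b\ge 1$'' regime. The reason the statement is useful in the sequel is presumably that in applications $b$ will be a quantity like $d_\theta(x,y)$ that is small, while $a$ will be an $L^\infty$-type bound that can be arbitrary, so the product $(1+a)b^\epsilon$ gives a Hölder-type estimate with exponent $\epsilon$ rather than $1$.
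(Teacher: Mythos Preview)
Your proof is correct and is essentially the same as the paper's: both split on $b\le 1$ versus $b\ge 1$ and use $b^\epsilon\ge b$ in the first case and $b^\epsilon\ge 1$ in the second. The only difference is that you spell out the second statement explicitly, whereas the paper leaves it to ``obvious modifications.''
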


\begin{proof}  If $b\le1$, then $x\le b\le b^\epsilon$.  Hence certainly
$x\le (1+a)b^\epsilon$.
If $b\ge1$, then $x\le a\le 1+a\le (1+a)b^\epsilon$.
The last sentence follows from obvious modifications.
\end{proof}

\begin{lemma} \label{lem-LY}
Let $\theta\in(0,1)$, $\epsilon\in(0,1]$.
Suppose that 
$V\in F_{\theta^{1/\epsilon}}^{\rm loc}(Y,\R^d)$ and
$H\in F_{\theta^{1/\epsilon}}^{\rm loc}(Y,G)$.
\begin{itemize}
\item[(a)]
	If $\sum_{a\in\alpha}\mu(a)(D_{\theta^{1/\epsilon}}H(a))^\epsilon<\infty$, then
the essential spectral radius of $M_H: F_\theta(Y,\R^d)\to F_\theta(Y,\R^d)$ is at most $\theta$.
\item[(b)]
Suppose that 
(i) $\sum_{a\in\alpha}\mu(a)|1_aV|_\infty<\infty$,
(ii)
$\sum_{a\in\alpha}\mu(a)(D_{\theta^{1/\epsilon}}V(a))^\epsilon(1+|1_aV|_\infty)<\infty$,
and (iii)
$\sum_{a\in\alpha}\mu(a)(D_{\theta^{1/\epsilon}}H(a))^\epsilon|1_aV|_\infty<\infty$.
Then $M_HV\in F_\theta(Y,\R^d)$.
\end{itemize}
\end{lemma}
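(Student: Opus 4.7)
The plan is to exploit the explicit branch-sum formula for the twisted transfer operator, $M_HV(y)=\sum_{a\in\alpha}q(y_a)H(y_a)^{-1}V(y_a)$ with $y_a=(F|_a)^{-1}y$, and to decompose pointwise differences into three contributions — change in the Jacobian density $q$, change in the cocycle $H$, and change in the observable $V$ — each controlled through the Gibbs-Markov distortion bound~(\ref{eq-GM}), orthogonality of the $G$-action on $\R^d$, and the interpolation identity of Proposition~\ref{prop-strange}.

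For part (b), the $L^\infty$ bound is immediate from~(\ref{eq-GM}) and hypothesis (i). For the Lipschitz seminorm I fix $y,y'\in Y$ and note that the preimages $y_a,y'_a\in a$ satisfy $s(y_a,y'_a)=s(y,y')+1$, giving both $d_\theta(y_a,y'_a)=\theta\,d_\theta(y,y')$ and $(d_{\theta^{1/\epsilon}}(y_a,y'_a))^\epsilon=\theta\,d_\theta(y,y')$. I split $M_HV(y)-M_HV(y')$ as a sum over $a$ of
\[
(q(y_a)-q(y'_a))H(y_a)^{-1}V(y_a)+q(y'_a)(H(y_a)^{-1}-H(y'_a)^{-1})V(y_a)+q(y'_a)H(y'_a)^{-1}(V(y_a)-V(y'_a)).
\]
The distortion estimate $|q(y_a)-q(y'_a)|\le D\mu(a)d_\theta(y,y')$ combined with hypothesis (i) controls the first term. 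Since $H$ is $G$-valued, $|H(y_a)^{-1}-H(y'_a)^{-1}|$ is simultaneously uniformly bounded (by the $G$-diameter) and bounded by $D_{\theta^{1/\epsilon}}H(a)\,d_{\theta^{1/\epsilon}}(y_a,y'_a)$, so Proposition~\ref{prop-strange} interpolates these into $\ll(D_{\theta^{1/\epsilon}}H(a))^\epsilon\theta\,d_\theta(y,y')$; multiplying by $q(y'_a)|V(y_a)|\le D\mu(a)|1_aV|_\infty$ and summing, hypothesis (iii) bounds the second term. An identical interpolation for $V$, using $|V(y_a)-V(y'_a)|\le 2|1_aV|_\infty$ and $|V(y_a)-V(y'_a)|\le D_{\theta^{1/\epsilon}}V(a)\,d_{\theta^{1/\epsilon}}(y_a,y'_a)$, yields $\ll(1+|1_aV|_\infty)(D_{\theta^{1/\epsilon}}V(a))^\epsilon\theta\,d_\theta(y,y')$, which is summable by hypothesis (ii). Hence $M_HV\in F_\theta(Y,\R^d)$.

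For part (a), I iterate this scheme using
\[
M_H^nV(y)=\sum_{a\in\alpha_n}q_n(y_a)H_n(y_a)^{-1}V(y_a),
\]
where now $y_a=(F^n|_a)^{-1}y$ for $a\in\alpha_n$, and $H_n(z)=H(z)H(Fz)\cdots H(F^{n-1}z)$. Since $V\in F_\theta$ globally, I replace $|1_aV|_\infty$ by $|V|_\infty$ and obtain $|V(y_a)-V(y'_a)|\le\theta^n|V|_\theta\,d_\theta(y,y')$. For the cocycle I telescope
\[
|H_n(y_a)-H_n(y'_a)|\le\sum_{j=0}^{n-1}|H(F^jy_a)-H(F^jy'_a)|
\]
and apply Proposition~\ref{prop-strange} to each summand; noting that $F^jy_a$ and $F^jy'_a$ lie in the component $a_j\in\alpha$ of the cylinder $a=[a_0,\ldots,a_{n-1}]$ and are separated in time $s(y,y')+n-j$, this produces a bound $\ll\sum_{j=0}^{n-1}(D_{\theta^{1/\epsilon}}H(a_j))^\epsilon\theta^{n-j}d_\theta(y,y')$. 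The key observation that keeps the estimate uniform in $n$ is $F$-invariance of $\mu$, which gives $\sum_{a\in\alpha_n:\,a_j=b}\mu(a)=\mu(b)$ and thereby reduces $\sum_{a\in\alpha_n}\mu(a)(D_{\theta^{1/\epsilon}}H(a_j))^\epsilon$ to the $n$-independent quantity $\sum_{b\in\alpha}\mu(b)(D_{\theta^{1/\epsilon}}H(b))^\epsilon$, finite by hypothesis. Geometric summation over $j$ then yields a Lasota-Yorke inequality
\[
\|M_H^nV\|_\theta\le D\theta^n\|V\|_\theta+C|V|_\infty
\]
with $D,C$ independent of $n$. The essential spectral radius bound $\le\theta$ follows from Hennion's theorem (Ionescu-Tulcea-Marinescu), using that the unit ball of $F_\theta(Y,\R^d)$ is relatively compact in $L^1(Y,\mu)$ — a standard diagonal-sequence argument on the countable partition $\alpha$.

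The principal technical step is the telescoping-and-invariance argument in part (a): obtaining a Lasota-Yorke inequality with constants independent of $n$ requires both Proposition~\ref{prop-strange}, which trades the uniform $G$-diameter bound against the much stronger $d_{\theta^{1/\epsilon}}$-Lipschitz bound to produce the correct exponent $\epsilon$ in the summability hypothesis, and the $F$-invariance identity that reduces $n$-cylinder sums to single-$\alpha$ sums. The remaining work is routine Gibbs-Markov bookkeeping.
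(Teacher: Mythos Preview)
Your proposal is correct and follows essentially the same approach as the paper: the same three-term decomposition of $M_HV(y)-M_HV(y')$ for part~(b), the same use of Proposition~\ref{prop-strange} to interpolate the uniform and Lipschitz bounds on $H$ and $V$, and for part~(a) the same telescoping of $H_n$ combined with $F$-invariance of $\mu$ to reduce $n$-cylinder sums to first-level sums, followed by an appeal to Hennion's theorem. Your bookkeeping in part~(a) --- indexing directly by the partition symbol $a_j\in\alpha$ rather than via the intermediate $(n-j)$-cylinder $F^ja$ --- is a slightly cleaner variant of the paper's argument, and your use of $L^1$-compactness instead of the paper's $L^\infty$-compactness for Hennion is an inessential difference.
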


\begin{proof}
We prove part (b) first.
Now $(M_HV)(y)=\sum_{a\in\alpha}q(y_a)H(y_a)^{-1}V(y_a)$
where $y_a$ denotes the unique preimage of $y$ in $a$.
Using~\eqref{eq-GM},
$|(M_HV)(y)|\le\sum_{a\in\alpha} q(y_a)|V(y_a)|\le D\sum_{a\in\alpha}\mu(a)|1_aV|_\infty$.   By (i), $|M_HV|_\infty <\infty$.

Similarly, $|(M_HV)(y)-(M_HV)(y')|\le I+II+III$, where
\begin{align*}
I & =\sum_{a\in\alpha}|q(y_a)-q(y_a')||V(y_a)|, \qquad 
II  =\sum_{a\in\alpha}q(y_a')|H(y_a)-H(y_a')||V(y_a)|,
\end{align*}
\begin{align*}
III & =\sum_{a\in\alpha}q(y_a')|V(y_a)-V(y_a')|. 
\end{align*}

By~\eqref{eq-GM} and property (i),
\begin{align*}
|I| & \le D\sum_{a\in\alpha} \mu(a) d_\theta(y,y')|1_aV|_\infty
=Dd_\theta(y,y')\sum_{a\in\alpha} \mu(a)|1_aV|_\infty \ll  d_\theta(y,y').
\end{align*}

Next, the estimates $|H(y_a)-H(y_a')|\le 2$ and $|H(y_a)-H(y_a')|\le D_{\theta^{1/\epsilon}}H(a)d_{\theta^{1/\epsilon}}(y_a,y_a')$ together imply by
Proposition~\ref{prop-strange} that
$|H(y_a)-H(y_a')|\le 2(D_{\theta^{1/\epsilon}}H(a))^\epsilon d_{\theta^{1/\epsilon}}(y_a,y_a')^\epsilon$.  Moreover, $d_{\theta^{1/\epsilon}}^\epsilon=d_\theta$.
By~\eqref{eq-GM} and property (iii),
\begin{align*}
	|II| & \le 2D\sum_{a\in\alpha} \mu(a) (D_{\theta^{1/\epsilon}}H(a))^\epsilon d_\theta(y_a,y_a')|1_aV|_\infty
	\\ & =2D\theta d_\theta(y,y')\sum_{a\in\alpha} \mu(a) (D_{\theta^{1/\epsilon}}H(a))^\epsilon |1_aV|_\infty \ll d_\theta(y,y').
\end{align*}

Similarly, we have $|V(y_a)-V(y_a')|\le 2|1_aV|_\infty$ and $|V(y_a)-V(y_a')|\le D_{\theta^{1/\epsilon}}V(a)d_{\theta^{1/\epsilon}}(y_a,y_a')$ which together imply by
Proposition~\ref{prop-strange} that
$|V(y_a)-V(y_a')|\le (1+2|1_aV|_\infty)(D_{\theta^{1/\epsilon}}V(a))^\epsilon d_{\theta^{1/\epsilon}}(y_a,y_a')^\epsilon$.
By~\eqref{eq-GM} and property (ii),
\begin{align*}
	|III| & \le D\sum_{a\in\alpha} \mu(a) (1+2|1_aV|_\infty)(D_{\theta^{1/\epsilon}}V(a))^\epsilon d_\theta(y_a,y_a') \ll d_\theta(y,y').
\end{align*}
Hence $\|M_HV\|_\theta=|M_HV|_\infty+|M_HV|_\theta<\infty$ as required.

Next we prove part (a). 
We claim that $\|M_H^nV\|_\theta\le C(|V|_\infty+\theta^n|V|_\theta)$.
Since the unit ball of $F_\theta(Y,\R^d)$ is compact in $L^\infty$, the
result then follows from~\cite{Hennion93}.

It remains to prove the claim.  This is done by
combining an argument in~\cite[Corollary 4.3(a)]{BHM05}
with the method used for term $II$ in part~(b).
Let $V\in F_\theta(Y,\R^d)$.
First, it is standard that $|M_HV|_\infty\le |V|_\infty$ and so
$|M_H^nV|_\infty\le|V|_\infty$.  Also,
$(M_H^nV)(y)=\sum_{a\in\alpha_n}q_n(y_a)H_n(y_a)^{-1}V(y_a)$ where 
$y_a$ denotes the unique preimage of $y$ under $F^n$ in $a$ and
$H_n=H\,H\circ F\cdots H\circ F^{n-1}$.
Hence $|(M_H^nV)(y)-(M_H^nV)(y')|\le I+II+III$ where
\begin{align*}
I & =\sum_{a\in\alpha_n}|q_n(y_a)-q_n(y_a')||V|_\infty, \qquad 
II  =\sum_{a\in\alpha_n}q_n(y_a')||H_n(y_a)-H_n(y_a')||V|_\infty,
\end{align*}
\begin{align*}
III & =\sum_{a\in\alpha_n}q_n(y_a')|V(y_a)-V(y_a')|. 
\end{align*}
Now $I\le D\sum_{a\in\alpha_n}\mu(a)d_\theta(y,y')|V|_\infty
=Dd_\theta(y,y')|V|_\infty$ and
$III\le D\sum_{a\in\alpha_n}\mu(a)|V|_\theta d_\theta(y_a,y_a')
=D\theta^n|V|_\theta d_\theta(y,y')$.
Also,
\[
II\le D\sum_{a\in\alpha_n}\mu(a)|H_n(y_a)-H_n(y_a')||V|_\infty,
\]
and
\begin{align*}
|H_n(y_a)-H_n(y_a')| & \le 
\sum_{j=0}^{n-1}|H(F^jy_a)-H(F^jy_a')|
\le 2\sum_{j=0}^{n-1}(D_{\theta^{1/\epsilon}}H(F^ja))^\epsilon d_\theta(F^j y_a,F^jy_a')
\\ & =2\sum_{j=0}^{n-1}(D_{\theta^{1/\epsilon}}H(F^ja))^\epsilon\theta^{n-j} d_\theta(y,y').
\end{align*}
Hence 
$II \le 2D \sum_{a\in\alpha_n}\mu(a)\sum_{j=0}^{n-1}(D_{\theta^{1/\epsilon}}H(F^ja))^\epsilon\theta^{n-j}  |V|_\infty d_\theta(y,y')$.
Now
\begin{align*}
	& \sum_{a\in\alpha_n}\mu(a)\sum_{j=0}^{n-1}(D_{\theta^{1/\epsilon}}H(F^ja))^\epsilon\theta^{n-j} 
	=\sum_{j=0}^{n-1}\sum_{b\in \alpha_{n-j}}\sum_{a\in\alpha_n,F^ja=b}\mu(a)(D_{\theta^{1/\epsilon}}H(F^ja))^\epsilon\theta^{n-j}  
	\\ & =\sum_{j=0}^{n-1}\theta^{n-j}\sum_{b\in \alpha_{n-j}}
	(D_{\theta^{1/\epsilon}}H(b))^\epsilon\sum_{a\in\alpha_n,F^ja=b}\mu(a)
	=\sum_{j=0}^{n-1}\theta^{n-j}\sum_{b\in \alpha_{n-j}}
	(D_{\theta^{1/\epsilon}}H(b))^\epsilon\mu(b)
	\\ & \le \sum_{j=0}^{n-1}\theta^{n-j}\sum_{a\in \alpha}
	(D_{\theta^{1/\epsilon}}H(a))^\epsilon\mu(a)
	 \le (1-\theta)^{-1}\sum_{a\in \alpha}
	(D_{\theta^{1/\epsilon}}H(a))^\epsilon\mu(a),
\end{align*}
so $II \ll |V|_\infty d_\theta(y,y')$.
The claim follows by combining these estimates.
\end{proof}

\begin{pfof}{Theorem~\ref{thm-CLT_GM}}
The proof largely follows~\cite{FMT03,MN04b}.
Suppose first that $M_H: F_\theta(Y,\R^d)\to F_\theta(Y,\R^d)$ has no eigenvalues on the unit circle.
By Lemma~\ref{lem-LY}(a), there exists $\tau<1$ such that the 
spectrum of $M_H$ lies strictly inside the ball of radius $\tau$.
In particular, there is a constant $C>0$ such that
$\|M_H^n\|\le C\tau^n$.

By Lemma~\ref{lem-LY}(b), $W=M_HV\in F_\theta(Y,\R^d)$.
Define $\chi=\sum_{j=1}^\infty M_H^jV=\sum_{j=0}^\infty M_H^jW$.
Our assumptions guarantee that this series is absolutely
convergent in $F_\theta(Y,\R^d)$ and hence $\chi\in F_\theta(Y,\R^d)$.
Write $V=\hat V+H\cdot (\chi\circ F)-\chi$.   Then $\hat V\in L^2$ (since $\chi\in F_\theta(Y,\R^d)$
and $V\in L^2$).
Moreover,
\[
M_HV=M_H\hat V+\chi-M_H\chi=
M_H\hat V+\sum_{j=1}^\infty M_H^jV-
\sum_{j=2}^\infty M_H^jV=M_H\hat V+M_HV,
\]
and so $M_H\hat V=0$.  At the level of $Y\times G$, we have
\[
\Phi = \hat\Phi +(g\cdot\chi)\circ F_H-g\cdot\chi,
\]
where $\hat\Phi=g\cdot\hat V$ is an $L^2$ observable and $g\cdot\chi$ lies in 
$L^\infty$.   By Proposition~\ref{prop-L}, $L\hat\Phi=0$.
It follows that the sequence $\{\hat\Phi\circ F_H^n;\,n\ge1\}$ defines
a reverse martingale sequence.
Hence we have decomposed $\Phi$ into a (reverse) $L^2$ martingale $\hat\Phi$ 
and an $L^\infty$ coboundary, as in Gordin~\cite{Gordin69}.   
By ergodicity, it follows as usual that we obtain the CLT and WIP for one-dimensional projections, and hence in $\R^d$ by the Cramer-Wold device.  

It remains to remove the assumption about eigenvalues for $M_H$ on the unit circle.    Suppose that there are $k$ such eigenvalues $e^{i\omega_\ell}$,
$\omega_\ell\in[0,2\pi)$, $\ell=1,\dots,k$ (including multiplicities).
Generalised eigenfunctions yield polynomial growth rates under iteration by $M_H$; this is impossible since $M_H$ is a contraction in $L^\infty$.
Hence we can write $V=V_0+\sum_{\ell=1}^kV_\ell$ where
$\|M_H^nV_0\|_\theta\le C\tau^n\|V_0\|_\theta$ and $M_H V_\ell=e^{i\omega_\ell}V_\ell$.
Correspondingly $\Phi=\Phi_0+\sum_{\ell=1}^k \Phi_\ell$ where
$\Phi_\ell=g\cdot V_\ell$, $\ell=0,\dots,k$.
In particular, we obtain the CLT and WIP for $\Phi_0$ by the above argument, while
$L\Phi_\ell=e^{i\omega_\ell}\Phi_\ell$, $\ell=1,\dots k$.

By ergodicity, the eigenvalue $1$ 
for $L$ corresponds to constant eigenfunctions (these only occur if the representation
$\R^d$ of $G$ includes trivial representations).   
Restricting to observables of mean zero removes these eigenfunctions, and then $1$ is
not an eigenvalue.
It follows that $\omega_\ell\in(0,2\pi)$, $\ell=1,\dots,k$.

Next, a simple argument (see~\cite{MN04b}) shows that $\Phi_\ell \circ F_H
= e^{-i\omega_\ell}\Phi_\ell$ for $\ell=1,\dots,k$, so that
$|\sum_{j=1}^n \Phi_\ell\circ F_H^j|_\infty\le 2|e^{i\omega_\ell}-1|^{-1}|\Phi_\ell|_\infty$
which is bounded in $n$.   Hence the result for $\Phi$ follows from the result for
$\Phi_0$.
\end{pfof}

\section{Central limit theorems for group extensions of nonuniformly
expanding maps}
\label{sec-NUE}

Let $f:X\to X$ be a nonuniformly expanding map of a metric space $(X,d)$,
with probability measure $\mu_0$, partition $\alpha$, integrable return time
$r:Y\to\Z^+$, and return map $F=f^r:Y\to Y$, satisfying
conditions (1)--(4) as described in Section~\ref{sec-Intro-NUE}.
There is a unique $F$-invariant probability measure $\mu_Y$ 
absolutely continuous with respect to $\mu_0|_Y$.
(So from now, the probability measure on $Y$ denoted by $\mu$ in Section~\ref{sec-GM}  is denoted $\mu_Y$.)
It is easily verified that
the map $F:Y\to Y$ is Gibbs-Markov on the probability space $(Y,\mu_Y)$ with 
partition $\alpha$ and $\theta=\lambda^{-\eta}$.
A standard elementary argument shows that
there is a constant $C>0$ such that
$d(x,y)\le Cd_\theta(x,y)^{1/\eta}$ for all $x,y\in Y$.

We continue to let $\mu$ denote the $f$-invariant probability measure on $X$ as
described after Remark~\ref{rmk-LSV}.  
The construction of $\mu$ starting from $\mu_Y$ is given explicitly at the beginning of the proof of Theorem~\ref{thm-CLT} at the end of this section.

As usual, we suppose that $h:X\to G$ is a measurable cocycle into a
compact connected Lie group $G$ with Haar measure $\nu$, and 
we define the group extension $f_h:X\times G\to X\times G$,
$f_h(x,g)=(fx,gh(x))$.   The invariant product measure $m=\mu\times\nu$
is assumed to be ergodic.

The proof of Theorem~\ref{thm-CLT}
proceeds by considering the return map for $f_h$ to the set $Y\times G$ and reducing to the set up in Section~\ref{sec-GM}.
The return time $r:Y\times G\to\Z^+$ is simply $r(y,g)=r(y)$.
Define the {\em induced cocycle} $H:Y\to G$, $H=h_r=h(h\circ f)\cdots(h\circ f^{r-1})$.    Then the return map $F_H=(f_h)^r:Y\times G\to Y\times G$ is given by
$F_H(y,g)=(Fy,gH(y))$, again with ergodic measure $m_Y=\mu_Y\times\nu$.

Let $\phi:X\times G\to\R^d$ be an observable of the form $\phi(x,g)=g\cdot v(x)$ 
where $v:X\to\R^d$ and $G$ acts orthogonally on $\R^d$.  
We define the {\em induced observable}
$\Phi(y,g)=\sum_{j=0}^{r(y)-1}\phi\circ f_h^j(y,g)$.
Then $\Phi(y,g)=g\cdot V(y)$ where
$V(y)=\sum_{j=0}^{r(y)-1}h_j(y)v(f^jy)$.
Let $Z_n=\{y\in Y:r(y)=n\}$.

\begin{prop} \label{prop-induced}
Let $p\ge1$.   
Suppose that $v$ and $h$ are $C^\eta$ for some $\eta\in(0,1]$,
with H\"older constants $|v|_\eta$ and $|h|_\eta$.
Let $\theta\in[\lambda^{-\eta},1)$.  Then
\begin{itemize}
\item[(a)]  $|1_{Z_n}V|_\infty \le |v|_\infty n$.
\item[(b)]  If $r\in L^p$, then $V\in L^p$ and moreover 
$\sum_{a\in\alpha}\mu_Y(a)|1_aV|_\infty^p<\infty$.
\item[(c)]  $|1_{Z_n}V|_\theta \ll (|v|_\eta+|v|_\infty|h|_\eta) n^2$.
\item[(d)]  $|1_{Z_n}H|_\theta \ll |h|_\eta n$.
\end{itemize}
\end{prop}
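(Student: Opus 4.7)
The plan is to handle parts (a) and (b) by direct computation, and to reduce both (c) and (d) to a single backward contraction estimate for orbit pieces of length $r(a)$ inside a single partition element $a \in \alpha$.

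For (a), each $h_j(y) \in G$ acts orthogonally on $\R^d$, so $|h_j(y)v(f^j y)| = |v(f^j y)| \le |v|_\infty$; summing the $n$ terms in $V(y)$ for $y \in Z_n$ gives the claim. Part (b) follows because $r$ is constant on each $a \in \alpha$: every $a$ lies in $Z_{r(a)}$, so (a) yields $|1_a V|_\infty \le |v|_\infty r(a)$, hence $\sum_a \mu_Y(a)|1_aV|_\infty^p \le |v|_\infty^p \int_Y r^p \, d\mu_Y < \infty$; the same estimate shows $V \in L^p$.

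The core lemma for (c) and (d) is the contraction bound $d(f^j y, f^j y')^\eta \ll d_\theta(y,y')$ for $y, y' \in a \in \alpha$ with $r(a) = n$ and $0 \le j < n$. To establish it, set $k = s(y,y') \ge 1$; then $y, y'$ lie in a common $k$-cylinder, so iterating condition (2) gives $d(F^k y, F^k y') \ge \lambda^{k-1} d(Fy, Fy')$, and boundedness of $Y$ yields $d(Fy, Fy') \ll \lambda^{-k}$. Condition (3) extends this to $d(f^j y, f^j y') \ll \lambda^{-k}$ uniformly in $0 \le j < n$. The standing assumption $\theta \ge \lambda^{-\eta}$ rewrites as $\lambda^{-k\eta} \le \theta^k = d_\theta(y,y')$, and raising to the power $\eta$ gives the claim.

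With this in place, (d) follows by telescoping $H = h \cdot (h\circ f) \cdots (h\circ f^{n-1})$ and using that each factor has bounded operator norm: $|H(y)-H(y')| \le \sum_{j=0}^{n-1}|h(f^j y) - h(f^j y')| \le |h|_\eta \sum_j d(f^j y, f^j y')^\eta \ll n|h|_\eta d_\theta(y, y')$. For (c), I split
$h_j(y)v(f^j y) - h_j(y')v(f^j y') = h_j(y)[v(f^j y) - v(f^j y')] + [h_j(y) - h_j(y')]v(f^j y')$.
Summing over $j$, the first piece contributes $\ll n|v|_\eta d_\theta(y,y')$; the same telescoping as above gives $|h_j(y)-h_j(y')| \ll j|h|_\eta d_\theta(y,y')$, so the second contributes $\ll n^2 |v|_\infty|h|_\eta d_\theta(y,y')$. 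Together these yield $|V(y)-V(y')| \ll (|v|_\eta + |v|_\infty|h|_\eta)n^2 d_\theta(y,y')$. For $y, y'$ in distinct partition elements one has $d_\theta(y,y') = 1$ and the bounds reduce to $L^\infty$ estimates, from (a) for $V$ and from compactness of $G$ for $H$. There is no genuine obstacle once the contraction lemma is in place; that lemma itself is standard from the nonuniformly expanding axioms (1)--(3).
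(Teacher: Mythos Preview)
Your proof is correct and follows essentially the same route as the paper: parts (a) and (b) are the same direct computations, and for (c) and (d) both you and the paper use condition~(3) together with the backward contraction $d(f^jy,f^jy')^\eta \ll d_\theta(y,y')$ (the paper obtains this via the pre-stated fact $d(x,y)\le C\,d_\theta(x,y)^{1/\eta}$ applied to $Fy,Fy'$, whereas you spell out the argument from axioms (2)--(3) and boundedness of $X$). Your explicit handling of the case $y,y'$ in different partition elements is a point the paper leaves implicit.
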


\begin{proof}
Part (a) is immediate.
Note that $\int_Y r^p\,d\mu_Y=\sum_{n=1}^\infty \mu_Y(Z_n)n^p$
implying part~(b).

Next, let $y,y'\in Z_n$.   Then
\begin{align*}
|H(y)-H(y')| & \le \sum_{j=0}^{n-1}|h(f^jy)-h(f^jy')|
\le |h|_\eta\sum_{j=0}^{n-1}d(f^jy,f^jy')^\eta \\ &
\ll n|h|_\eta d(Fy,Fy')^\eta
\ll n|h|_\eta d_{\theta}(y,y'),
\end{align*}
and part (d) follows.

Finally, for $y,y'\in Z_n$,
$|V(y)-V(y')|=|\sum_{j=0}^{n-1} h_j(y)v(f^jy)-\sum_{j=0}^{n-1} h_j(y')v(f^jy')|
\ll |\sum_{j=0}^{n-1} h_j(y)-h_j(y')||v|_\infty + 
n|v|_\eta d_{\theta}(y,y')$.
Moreover, $|\sum_{j=0}^{n-1} h_j(y)-h_j(y')|\le 
\sum_{j=0}^{n-1}\sum_{k=0}^{j-1}|h(f^kx)-h(f^ky')|\ll |h|_\eta n^2d_{\theta}(y,y')$.
Part (c) follows.
\end{proof}

\begin{lemma} \label{lem-CLT}   Under the hypotheses of Theorem~\ref{thm-CLT},
the induced observable $\Phi=g\cdot V:Y\times G\to\R^d$ satisfies the CLT and WIP.
\end{lemma}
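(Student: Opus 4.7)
The plan is to apply Theorem~\ref{thm-CLT_GM} to the induced Gibbs--Markov extension $F_H\colon Y\times G\to Y\times G$ with the equivariant observable $\Phi(y,g)=g\cdot V(y)$. The preamble to this section already records that $F=f^r\colon Y\to Y$ is Gibbs--Markov on $(Y,\mu_Y)$ with separation-time metric $d_{\theta_0}$, where $\theta_0=\lambda^{-\eta}$, so the work is to turn the hypotheses of Theorem~\ref{thm-CLT}---that is, $v,h$ uniformly H\"older, $r\in L^p$ for some $p>1$, and $V\in L^2$---into the regularity and summability conditions required by Theorem~\ref{thm-CLT_GM}.

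The key parameter choice is to fix $\epsilon\in(0,1]$ small enough that $2\epsilon+1\le p$ and to set $\theta:=\theta_0^\epsilon\in[\theta_0,1)$, so that $\theta^{1/\epsilon}=\theta_0$. Then Proposition~\ref{prop-induced}(c),(d) apply with this $\theta$ (since $\theta\ge\theta_0=\lambda^{-\eta}$) and with $\theta_0$ respectively, yielding $D_\theta V(a)\ll n^2$ and $D_{\theta_0}H(a)\ll n$ for every partition element $a\subset Z_n$. This places $V\in F_\theta^{\rm loc}(Y,\R^d)$ and $H\in F_{\theta^{1/\epsilon}}^{\rm loc}(Y,G)$, which is the local regularity demanded by Theorem~\ref{thm-CLT_GM}. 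The hypothesis $V\in L^2$ is given.

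The three summability conditions of Theorem~\ref{thm-CLT_GM} then reduce to plain bookkeeping against $\sum_n\mu_Y(Z_n)n^p=\int_Yr^p\,d\mu_Y<\infty$, using also $|1_{Z_n}V|_\infty\le|v|_\infty n$ from Proposition~\ref{prop-induced}(a). Condition (i) is
\[
\sum_{a\in\alpha}\mu_Y(a)|1_aV|_\infty\ll\sum_{n\ge1}\mu_Y(Z_n)n=\|r\|_{L^1}<\infty.
\]
Condition (ii) gives
\[
\sum_{a\in\alpha}\mu_Y(a)(D_{\theta^{1/\epsilon}}V(a))^\epsilon(1+|1_aV|_\infty)\ll\sum_{n\ge1}\mu_Y(Z_n)n^{2\epsilon}(1+n)\ll\sum_{n\ge1}\mu_Y(Z_n)n^{2\epsilon+1},
\]
which is finite by the choice of $\epsilon$. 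Condition (iii) is dominated by the same bound, because $(D_{\theta_0}H(a))^\epsilon\ll n^\epsilon\le n^{2\epsilon}$ on $Z_n$.

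With (i)--(iii) verified and $V\in L^2$, Theorem~\ref{thm-CLT_GM} delivers both the CLT and the WIP for the induced sums $\Phi_n=\sum_{j=0}^{n-1}\Phi\circ F_H^j$ on $(Y\times G,m_Y)$, with a covariance matrix commuting with the $G$-action. There is no serious obstacle in this argument; the only subtle point is the interplay between the two separation-time metrics, arranged by taking $\theta^{1/\epsilon}=\theta_0$ so that the finer metric $d_{\theta_0}$ governs $H$ while the coarser $d_\theta$ suffices for $V$, and trading the polynomial-in-$n$ blow-up of the local H\"older seminorms against the $\epsilon$-th power in Theorem~\ref{thm-CLT_GM}(ii),(iii) via the choice $\epsilon\le(p-1)/2$.
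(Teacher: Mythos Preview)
Your proof is correct and follows essentially the same approach as the paper: choose $\epsilon\le(p-1)/2$, adjust $\theta$ so that $\theta^{1/\epsilon}=\theta_0=\lambda^{-\eta}$, and then feed the estimates of Proposition~\ref{prop-induced} into the three summability conditions of Theorem~\ref{thm-CLT_GM}, reducing each to $\sum_n\mu_Y(Z_n)n^{2\epsilon+1}\le\int_Y r^p\,d\mu_Y<\infty$. The paper's version is terser but identical in substance; your explicit choice $\theta=\theta_0^\epsilon$ is exactly what the paper's phrase ``increase $\theta$ if necessary'' amounts to.
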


\begin{proof} 
We verify the hypotheses of Theorem~\ref{thm-CLT_GM}.
By assumption, $V\in L^2$.
By Proposition~\ref{prop-induced}(b), condition~(i) holds.

Let $\epsilon\in(0,1)$, $\epsilon\le (p-1)/2$.  
Increase $\theta\in(0,1)$ if necessary so 
that $V\in F_{\theta^{1/\epsilon}}^{\rm loc}(Y,\R^d)$ and
$H\in F_{\theta^{1/\epsilon}}^{\rm loc}(Y,G)$.
By Proposition~\ref{prop-induced}(c), 
\[
	\sum_{a\in\alpha}\mu_Y(a)(D_{\theta^{1/\epsilon}}V(a))^\epsilon(1+|1_aV|_\infty)
\ll \sum_{n\ge1}\mu_Y(Z_n)n^{2\epsilon+1}
\le \sum_{n\ge1}\mu_Y(Z_n)n^p=\int_Y r^p\,d\mu_Y<\infty,
\]
verifying condition (ii) of Theorem~\ref{thm-CLT_GM}.
Similarly, condition (iii) follows from Proposition~\ref{prop-induced}(d).
\end{proof}

The next result, which is proved in the Appendix, is a special case of~\cite{MZapp} showing that, under a mild 
condition, to prove the WIP it suffices to prove the WIP for an induced map.

\begin{thm} \label{thm-MZ}
Suppose that $q:\Omega\to \Omega$ is an ergodic measure-preserving transformation
of a probability space $(\Omega,m)$ and $\phi:\Omega\to\R^d$ is an integrable observable
of mean zero.   Let $\Lambda\subset \Omega$ have positive measure and set $m_\Lambda=(m|\Lambda)/m(\Lambda)$.
Let $r:\Lambda\to\Z^+$
be the first return time to $\Lambda$, namely $r(y)=\inf\{n\ge1:q^ny\in \Lambda\}$.   Suppose that $r$ is integrable
and set $\bar r=\int_\Lambda r\,dm_\Lambda$.   

Define the first return map
$Q=q^r:\Lambda\to \Lambda$ and the induced observable $\Phi=\sum_{j=0}^{r-1}\phi\circ q^j:\Lambda\to\R^d$.
Define the Birkhoff sums $\phi_n=\sum_{j=0}^{n-1}\phi\circ q^j$,
$\Phi_n=\sum_{j=0}^{n-1}\Phi\circ Q^j$.
Also define $\Psi=\max_{0\le\ell<r}|\phi_\ell|:\Lambda\to\R$.

Let $w_n(t)=n^{-\frac12}\phi_{nt}$ and
$W_n(t)=n^{-\frac12}\Phi_{nt}$ for $t=0,\frac1n,\frac2n,\ldots$ and 
linearly interpolate to obtain
processes $w_n,W_n\in C([0,\infty),\R^d)$.   

Assume that 
\begin{itemize}
\item[(a)] $W_n\to_w W$ in $C([0,\infty),\R^d)$ on $(\Lambda,m_\Lambda)$
where $W$ is a $d$-dimensional Brownian motion with covariance matrix
$\Sigma$, and
\item[(b)] $n^{-\frac12}\max_{j=0,\dots,n}\Psi\circ Q^j\to0$ in probability  on $(\Lambda,m_\Lambda)$.
\end{itemize}
Then $w_n\to_w \widetilde W$ in $C([0,\infty),\R^d)$ on $(\Omega,m)$ where
$\widetilde W=(\bar r)^{-\frac12} W$ is a $d$-dimensional Brownian motion with covariance matrix $\widetilde \Sigma=(\bar r)^{-1}\Sigma$.

If condition~(b) fails, then we still have the CLT: 
$n^{-\frac12}\phi_n\to_d N(0,\widetilde\Sigma)$.
\end{thm}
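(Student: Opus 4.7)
The plan is to relate the Birkhoff sums of $\phi$ along $q$ to those of $\Phi$ along the induced map $Q$ via a lap counting function, invoke assumption~(a) together with a random time change to obtain the WIP on $(\Lambda,m_\Lambda)$, and then transfer the limit to $(\Omega,m)$ via Zweim\"uller's strong distributional convergence~\cite{Zweimueller07}.

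For $y\in\Lambda$ set $r_j(y)=\sum_{i=0}^{j-1}r(Q^iy)$ and let $N(n)(y)=\max\{j\ge0:r_j(y)\le n\}$; Birkhoff's ergodic theorem applied to the $m_\Lambda$-preserving map $Q$ and the integrable observable $r$ yields $r_j/j\to\bar r$ almost surely, and hence $\tau_n(t):=N(nt)/n\to t/\bar r$ almost surely for each $t$. Monotonicity in $t$ together with continuity of the limit upgrades this to a.s.\ uniform convergence on compact subsets of $[0,\infty)$. Decomposing the $n$-step Birkhoff sum of $\phi$ at the successive returns to $\Lambda$ yields the identity
\[
\phi_n(y)=\Phi_{N(n)(y)}(y)+R_n(y),\qquad |R_n(y)|\le\Psi(Q^{N(n)(y)}y),
\]
so on $(\Lambda,m_\Lambda)$ the process-level relation $w_n(t)=W_n(\tau_n(t))+n^{-1/2}R_{nt}$ holds identically.

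By~(a), $W_n\to_w W$ in $C([0,\infty),\R^d)$; combined with the convergence in probability of $\tau_n$ to the deterministic continuous limit $t\mapsto t/\bar r$, one obtains joint weak convergence of $(W_n,\tau_n)$ and hence, applying the continuous mapping theorem to the composition map $(\omega,\sigma)\mapsto\omega\circ\sigma$ (continuous at pairs with continuous second coordinate), $W_n\circ\tau_n\to_w W(\cdot/\bar r)$, a Brownian motion with covariance $\widetilde\Sigma=\Sigma/\bar r$. Assumption~(b) supplies
\[
\sup_{t\in[0,T]}n^{-1/2}|R_{nt}|\le n^{-1/2}\max_{0\le j\le\lceil nT\rceil}\Psi\circ Q^j\to0
\]
in probability, so $w_n$ and $W_n\circ\tau_n$ are asymptotically indistinguishable in $C([0,T],\R^d)$, yielding $w_n\to_w\widetilde W$ on $(\Lambda,m_\Lambda)$.

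To extend the WIP from $(\Lambda,m_\Lambda)$ to $(\Omega,m)$, note that the probability $m(\Lambda)^{-1}m|_\Lambda$ on $\Omega$ is absolutely continuous with respect to $m$, while $w_n$ is asymptotically $q$-invariant in $C([0,T],\R^d)$: $w_n\circ q-w_n$ equals a time shift of size $1/n$ plus the additive correction $-n^{-1/2}\phi$, both of which vanish $m$-a.e. Zweim\"uller's strong distributional convergence theorem~\cite{Zweimueller07} then transfers the weak limit from $m(\Lambda)^{-1}m|_\Lambda$ to every absolutely continuous probability, in particular to $m$ itself. The final CLT clause, when~(b) is dropped, is obtained by evaluating the above decomposition at $t=1$ alone: the random time change argument still produces $W_n(\tau_n(1))\to_d N(0,\widetilde\Sigma)$, while the pointwise error $n^{-1/2}\Psi\circ Q^{N(n)}$ is shown to vanish in probability by truncating $\Psi$ at level $\epsilon\sqrt n$ and exploiting the integrability $\int\Psi\,dm_\Lambda\le m(\Lambda)^{-1}\int|\phi|\,dm<\infty$. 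The hardest step is the functional WIP on $\Lambda$: assumption~(b) is precisely what is needed to upgrade single-time control of the error $R_n$ to the uniform-in-$t$ control required for convergence in $C([0,T],\R^d)$, and without it one is forced back to the CLT-only conclusion.
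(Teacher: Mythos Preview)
Your argument for the WIP is essentially the paper's own proof: the decomposition $\phi_n=\Phi_{N(n)}+R_n$, the random time change $W_n\circ\tau_n$ via lap numbers and the ergodic theorem, the continuous mapping theorem, the control of the remainder by $\max_j\Psi\circ Q^j$ under assumption~(b), and the transfer to $(\Omega,m)$ via Zweim\"uller all match the paper's Appendix step by step.

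The CLT clause (when (b) is dropped) is where your sketch diverges, and there is a gap. You bound the pointwise error by $n^{-1/2}\Psi\circ Q^{N(n)}$ and claim this vanishes in probability by ``truncating $\Psi$ at level $\epsilon\sqrt n$ and exploiting the integrability $\int\Psi\,dm_\Lambda<\infty$''. But $Q^{N(n)}$ is not $m_\Lambda$-preserving: the base point of the excursion in progress at time $n$ is size-biased by $r$, so a bound on $m_\Lambda(\Psi\circ Q^{N(n)}>\epsilon\sqrt n)$ does not follow from $\Psi\in L^1$ alone (a Markov-inequality attempt would require $\Psi r\in L^1$, which you do not have). What actually makes this work is that the column over $\{\Psi>K\}$ in the tower has $m$-measure $\bar r^{-1}\int_{\{\Psi>K\}}r\,dm_\Lambda$, which tends to $0$ as $K\to\infty$ by dominated convergence using $r\in L^1$ (not $\Psi\in L^1$); then $m_\Lambda(\Psi\circ Q^{N(n)}>\epsilon\sqrt n)\le m(\Lambda)^{-1}m(q^{-n}\{\text{column over }\{\Psi>\epsilon\sqrt n\}\})$ gives the conclusion. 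The paper instead passes to the natural extension and writes the remainder as $n^{-1/2}H\circ q^n$ for a fixed measurable function $H$ on $\Omega$ (the partial sum back to the last visit to $\Lambda$), whence $m(|H\circ q^n|>\epsilon\sqrt n)=m(|H|>\epsilon\sqrt n)\to0$ simply because $H$ is a.e.\ finite. Either route works, but your stated justification does not.
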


\begin{rmk}
Condition~(b) provides control during individual excursions in $\Omega$ from $\Lambda$.
By Corollary~\ref{cor-Psi} in the Appendix, it suffices that $\Psi\in L^2$
(which is certainly the case if $\phi\in L^\infty$ and $r\in L^2$).

The only property of Brownian motion that is used in the proof is that the sample paths are continuous (relaxing this condition is the main point of~\cite{MZapp}).   Also, the argument goes through if the normalisation factor $n^\frac12$ is replaced by a general regularly varying function.

Analogous methods for obtaining the CLT by inducing can be found for example in~\cite{ChazottesGouezel07, Gouezel07, MT04}.   If the CLT is the main goal, then these approaches may be preferable to Theorem~\ref{thm-MZ}.
\end{rmk}

\begin{pfof}{Theorem~\ref{thm-CLT}}
	Since $r:Y\times G\to\Z^+$ is not necessarily the {\em first} return time to $Y\times G$ for $f_h:X\times G\to X\times G$, we cannot directly apply Theorem~\ref{thm-MZ}.
	This is circumvented by using a tower construction to build an extension of $X\times G$ for which $r:Y\times G\to \Z^+$ is the first return time.

	First we recall the definition of the tower for $f:X\to X$.
	(In doing so, we specify how $\mu$ is constructed from $\mu_Y$.)
	Define the tower map $f_\Delta:\Delta\to\Delta$ by
	$\Delta=\{(y,\ell)\in Y\times \Z:0\le\ell<r(y)\}$ and
	$f_\Delta(y,\ell)=\begin{cases} (y,\ell+1), & \ell\le r(y)-2 \\
(Fy,0), & \ell = r(y)-1 \end{cases}$.
	The probability measure
	$\mu_\Delta=\mu_Y\times{\rm counting}/\int_Y r\,d\mu_Y$ is $f_\Delta$-invariant.
	The projection $p:\Delta\to X$, $p(y,\ell)=f^\ell y$, defines a semiconjugacy between $f_\Delta$ and $f$, and $\mu$ is defined to be
$\mu=p_*\mu_\Delta$.

Similarly, we define the tower map $f_{\Delta\times G}:\Delta\times G\to\Delta\times G$ by
	$\Delta\times G=\{(y,g,\ell)\in Y\times G\times\Z:0\le\ell<r(y)\}$ and
	$f_{\Delta\times G}(y,g,\ell)=\begin{cases} (y,g,\ell+1), & \ell\le r(y)-2 \\
(F_H(y,g),0), & \ell = r(y)-1 \end{cases}$.
	The probability measure
	$m_\Delta=\mu_\Delta\times\nu$ is $f_{\Delta\times G}$-invariant.
	The projection $\pi:\Delta\times G\to X\times G$, $\pi(y,g,\ell)=T_h^\ell(y,g)$, defines a semiconjugacy between $f_{\Delta\times G}$ and $f_h$.
	Moreover, $m=\mu\times\nu$ satisfies
	$m=\pi_*m_\Delta$.

	Starting with the original observable  $\phi=g\cdot v:X\times G\to\R^d$, we define
	$\hat v=v\circ p:\Delta\to\R^d$ and
	$\hat \phi=\phi\circ\pi=g\cdot \hat v:\Delta\times G\to\R^d$. Since $m=\pi_*m_\Delta$, it follows that $\{\hat\phi\circ f_{\Delta\times G}:\,j\ge0\}=_d
	\{\phi\circ f_h:\,j\ge0\}$.
Hence to prove the CLT/WIP for $\phi$ on $X\times G$, it suffices to prove the
CLT/WIP for $\hat\phi$ on $\Delta\times G$.

Since $r:Y\times G\to\Z^+$ is the first return time for $f_{\Delta\times G}:\Delta\times G\to\Delta\times G$, with first return map $F_H:Y\times G\to Y\times G$,
we are now in a position to apply Theorem~\ref{thm-MZ}.
Take $q=f_{\Delta\times G}$, $Q=F_H$, $\Omega=\Delta\times G$, $\Lambda=Y\times G$.
Also, we have $\hat\phi(x,g)=g\cdot \hat v(x)$ and $\Phi(y,g)
=\sum_{j=0}^{r(y)-1}\hat\phi\circ q^j$.  It follows from the definitions
that 
\[
	\Phi=g\cdot V, \quad V(y) =\sum_{j=0}^{r(y)-1}h_j(y)v(f^jy).
\]

Assumption Theorem~\ref{thm-MZ}(a) is immediate from Lemma~\ref{lem-CLT}.
The CLT for $\hat\phi$ follows by the last statement of Theorem~\ref{thm-MZ}.

Next, $\hat\phi_\ell(y,g,0)=\phi_\ell(y,g)$ and
\[
|\phi_\ell(y,g)|= \Bigl|g\cdot\sum_{j=0}^{\ell}h_j(y)v(f^jy)\Bigr|
=\Bigl|\sum_{j=0}^{\ell}h_j(y)v(f^jy)\Bigr|,
\]
so $\Psi(y,g)=V^*(y)$.
Hence the assumption that $V^*\in L^2$ in Theorem~\ref{thm-CLT} implies
that $\Psi\in L^2$ and so
assumption Theorem~\ref{thm-MZ}(b) is satisfied.
\end{pfof}

\section{Central limit theorems for group extensions of intermittency maps}
\label{sec-PM}

In the case of the intermittency maps~\eqref{eq-LSV},
it is well-known that there is a constant $c=c_\gamma>0$
such that $\mu(r>n)\sim cn^{-1/\gamma}$.
In particular, $r\in L^2$ if and only if $\gamma<\frac12$.
Hence, Theorem~\ref{thm-CLT} applies immediately when $\gamma\in[0,\frac12)$.

For $\gamma\in[\frac12,1)$, we have $r\in L^p$ where $p\in(1,2)$.   The CLT and WIP
still hold provided we can verify that $V^*\in L^2$.
Here we require further more specific information about the maps~\eqref{eq-LSV}.
Let $Z_n=\{y\in Y:r(y)=n\}$.
Then it is well known that in fact
$\mu(Z_n)\ll n^{-(1+1/\gamma)}$.
Furthermore, $\diam(f^kZ_n)\ll (n-k)^{-(1+1/\gamma)}$ 
and $|f^ky|\ll (n-k)^{-1/\gamma}$ for $y\in Z_n$, $k=1,\dots,n$.
(See for example~\cite{LiveraniSaussolVaienti99,Gouezel04,Sarig02}.)

\begin{thm}    \label{thm-L2}
Suppose that $f$ is one of the maps~\eqref{eq-LSV}.
Suppose that $v,h\in C^{\eta}$, $\eta\in(0,1]$.
Suppose further that $\eta>\gamma-\frac12$.
If $v(0)\in(\Fix h(0))^\perp$, then $V^*\in L^2$
and  hence $\phi$ satisfies the CLT and WIP.
\end{thm}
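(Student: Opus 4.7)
The plan is to estimate $V^*(y)$ pointwise for $y \in Z_n$ and to exploit the hypothesis $v(0) \in (\Fix h(0))^\perp$ to prevent the Hölder error from accumulating during the long excursion of the orbit near the neutral fixed point $0$. Writing $y' = fy$ and using that $G$ acts by orthogonal matrices, for $\ell \in \{1, \dots, n-1\}$ one has
\[
W_\ell(y) := \sum_{j=0}^\ell h_j(y) v(f^j y) = v(y) + h(y)\,S_\ell(y'), \qquad S_\ell(y') := \sum_{k=0}^{\ell-1} h_k(y') v(f^k y'),
\]
so it suffices to bound $|S_\ell(y')|$ uniformly in $\ell$. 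I will compare $S_\ell(y')$ with the ``frozen'' sum $T_\ell := \sum_{k=0}^{\ell-1} h(0)^k v(0)$. The hypothesis $v(0) \in (\Fix h(0))^\perp$ ensures that $I - h(0)$ is invertible on the $h(0)$-invariant subspace containing $v(0)$, so $T_\ell = (I - h(0))^{-1}(I - h(0)^\ell) v(0)$ and hence $|T_\ell| \le C_0$ uniformly in $\ell$.

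Next I bound the error $E_\ell := S_\ell(y') - T_\ell$ by splitting each summand as $h_k(y')[v(f^k y') - v(0)] + [h_k(y') - h(0)^k]\,v(0)$. The ``$v$-difference'' part is controlled directly by the Hölder regularity of $v$ together with the standard orbit bound $|f^k y'| = |f^{k+1} y| \ll (n - k - 1)^{-1/\gamma}$ for $y \in Z_n$. The ``$h$-difference'' part is more delicate: a naive bound on $|h_k(y') - h(0)^k|$ produces a double sum leading to the useless estimate $V^* \ll n^{2 - \eta/\gamma}$. The key is the telescoping identity
\[
h_k(y') - h(0)^k = \sum_{j=0}^{k-1} h_j(y')\, \delta_j\, h(0)^{k-1-j}, \qquad \delta_j := h(f^j y') - h(0),
\]
which, after swapping the order of summation in $\sum_{k=0}^{\ell-1}[h_k(y') - h(0)^k]\, v(0)$, produces inner sums of the form $\sum_{m=0}^{\ell - 2 - j} h(0)^m v(0)$, each bounded by $C_0$. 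This collapses the double sum to a single sum, giving $|E_\ell| \ll \sum_{k=0}^{\ell-1}(n - k - 1)^{-\eta/\gamma} \ll \sum_{i=1}^{n-1} i^{-\eta/\gamma}$.

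The right-hand side is $O(1)$ if $\eta/\gamma > 1$, $O(\log n)$ if $\eta/\gamma = 1$, and $O(n^{1 - \eta/\gamma})$ if $\eta/\gamma < 1$. In all cases $V^*(y) \ll 1 + n^{\max(1 - \eta/\gamma,\,0)}$ (up to a log at equality) for $y \in Z_n$. Using $\mu_Y(Z_n) \ll n^{-(1 + 1/\gamma)}$,
\[
\int_Y |V^*|^2 \, d\mu_Y \ll \sum_{n \ge 1} n^{-(1 + 1/\gamma) + 2\max(1 - \eta/\gamma,\,0)},
\]
and a direct check shows that this converges precisely when $\eta > \gamma - \tfrac12$, matching the hypothesis. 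Theorem~\ref{thm-CLT} then delivers both the CLT and the WIP for $\phi$.

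The main obstacle is the telescoping/summation-by-parts step above: without the fact that the frozen partial sums $\sum_m h(0)^m v(0)$ are uniformly bounded (which is exactly the content of $v(0) \in (\Fix h(0))^\perp$), the quadratic accumulation of Hölder errors along the long excursion cannot be avoided, and one would be forced into the strictly stronger assumption $\eta > 2\gamma - \tfrac12$. Everything else is bookkeeping with the known orbit geometry near the indifferent fixed point.
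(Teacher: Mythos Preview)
Your proposal is correct and follows essentially the same strategy as the paper. Both arguments hinge on exactly the point you identify: the uniform bound $\sup_\ell\bigl|\sum_{m=0}^\ell h(0)^m v(0)\bigr|<\infty$ (your $C_0$, the paper's~(4.2)) combined with a summation-by-parts that collapses the naive double sum to a single sum of size $\sum_{k}(n-k)^{-\eta/\gamma}\ll n^{1-\eta/\gamma}$. The packaging differs slightly: the paper first splits $v=(v-v(0))+v(0)$ and, for the constant part, rewrites $h_j(y)=\bigl[\prod_{k<j}A_k(y)\bigr]h(0)^j$ with conjugates $A_k=h(0)^k\,\bar h(f^ky)\,h(0)^{-k}$ before telescoping $\prod A_k$; you instead telescope $h_k(y')-h(0)^k=\sum_{j<k}h_j(y')\,\delta_j\,h(0)^{k-1-j}$ directly and then swap sums. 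These are the same resummation in different coordinates, and both yield the identical pointwise bound $V^*|_{Z_n}\ll n^{1-\eta/\gamma}$ (the paper suppresses your case split by assuming without loss that $\eta<\gamma$). Your remark about the cruder estimate forcing $\eta>2\gamma-\tfrac12$ also matches the paper's Remark~\ref{rmk-direct}.
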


\begin{proof}
	We may suppose without loss that $\eta\in(\gamma-\frac12,\gamma)$.

Writing $v=(v-v(0))+v(0)$, we may consider the cases $v(0)=0$ and $v\equiv v(0)$ separately.
The case $v(0)=0$
is identical to the
argument in~\cite{Gouezel04} and is repeated here for completeness.
For $y\in Z_n$,
\begin{align*}
|V^*(y)| & \le \sum_{j=0}^{n-1}|h_j(y)v(f^jy)|\le
\sum_{j=0}^{n-1}|v|_{\eta}|f^jy|^\eta\ll
\sum_{j=1}^{n-1}(n-j)^{-\eta/\gamma}\ll n^{1-\eta/\gamma}.
\end{align*}
Hence,
\begin{align} \label{eq-V*}
\int_Y|V^*(y)|^2\,d\mu
\ll \sum_{n} n^{2-2\eta/\gamma}n^{-(1+1/\gamma)}<\infty.
\end{align}

It remains to consider the case $v\equiv v(0)$.
Since $G$ acts orthogonally and $v(0)\in(\Fix h(0))^\perp$, 
\begin{align} \label{eq-h0}
\sup_{\ell\ge0}\Bigl|\sum_{j=0}^{\ell} [h(0)]^jv(0)\Bigr|<\infty.
\end{align}

Set $\bar h(y)=h(y)h(0)^{-1}$ and
$A_k=h(0)^k\,(\bar h\circ f^k)\, h(0)^{-k}$.
In the noncommutative products below, we write $\prod_{k=0}^{j-1}a_k=a_0a_1\cdots a_{j-1}$.
Then for $j\ge2$,
\begin{align*}
h_j(y) & =
\prod_{k=0}^{j-1}h(f^ky) =\prod_{k=0}^{j-1}[\bar h(f^ky)h(0)]
=\Bigl[\prod_{k=0}^{j-1}A_k(y)\Bigr]h(0)^j \\
& =\sum_{k=1}^{j-1}\Bigl[\prod_{i=0}^{k-1}A_i(y)\Bigr](A_k(y)-I)h(0)^j + A_0(y)h(0)^j.
\end{align*}
Moreover, for $y\in Z_n$,
\[
|A_k(y)-I|=|\bar h(f^ky)-\bar h(0)| =O(|f^ky|^\eta)= O((n-k)^{-\eta/\gamma}).
\]
Hence by~\eqref{eq-h0}, for $\ell\le n$,
\begin{align*}
\Bigl|\sum_{j=0}^\ell h_j(y)v(0)\Bigr| & =
\Bigl|\sum_{j=2}^\ell 
\sum_{k=1}^{j-1}\Bigl[\prod_{i=0}^{k-1}A_i(y)\Bigr](A_k(y)-I)h(0)^jv(0)\Bigr| + O(1) \\
& = \Bigl|\sum_{k=1}^{\ell-1}\Bigl[\prod_{i=0}^{k-1}A_i(y)\Bigr](A_k(y)-I)\sum_{j>k} h(0)^jv(0)\Bigr|+O(1) \\
& \ll \sum_{k=1}^{\ell-1}|A_k(y)-I|+1
 \ll \sum_{k=1}^{\ell-1} (n-k)^{-\eta/\gamma}+1 \\ &
 \le \sum_{k=1}^{n-1} (n-k)^{-\eta/\gamma}+1 \ll n^{1-\eta/\gamma}.
\end{align*}
It follows that
\begin{align*}
|V^*(y)| & =\max_{0\le\ell<n-1}\Bigl|\sum_{j=0}^{\ell}h_j(y)v(0)\Bigr|
\ll n^{1-\eta/\gamma},
\end{align*}
establishing the required estimate just as in~\eqref{eq-V*}.
\end{proof}

In particular, Theorem~\ref{thm-suppress} holds for $v$, $h$ sufficiently H\"older.

\begin{rmk}  \label{rmk-direct}
The resummation argument in the proof of Theorem~\ref{thm-L2} is required in order to fully exploit~\eqref{eq-h0}.   The more direct estimate
$|V^*(y)|\le \sum_{j=0}^n|h_j(y)-h_j(0)||v(0)|+|\sum_{j=0}^n h(0)^jv(0)|\ll n^{2-\eta/\gamma}$ establishes that 
$V\in L^2$ provided $\eta>2\gamma-\frac12$.    
However, even for $h$ Lipschitz ($\eta=1$) this approach succeeds only  for $\gamma<\frac34$.

Our original version of this resummation argument led to the same result but under the unnecessarily stringent restriction $\eta>\gamma$.   The improved (and simplified) argument was pointed out to us by S\'ebastien Gou\"ezel.
\end{rmk}

 \appendix

\renewcommand{\thesubsection}{\Alph{section}.\arabic{subsection}}

\section{Inducing the weak invariance principle}
\label{sec-MZ}

Theorem~\ref{thm-MZ} is a special case of~\cite[Theorem~2.2]{MZapp}.
Since the proof is greatly simplified, and since the published version of~\cite{MZapp} refers to this appendix, we provide the full details here.

As in the proof of Theorem~\ref{thm-CLT_GM}, by the Cramer-Wold device we may suppose without loss that $d=1$.

It is convenient to work throughout with
the Skorohod spaces $\mathcal{D}[0,T]$ and $\mathcal{D}[0,\infty)$ of real-valued cadlag
functions (right-continuous $g(t^{+})=g(t)$ with left-hand limits $g(t^{-})$)
on the respective interval, with the sup-norm topology in the case of
$\mathcal{D}[0,T]$ and the topology of uniform convergence on compact subsets in the case of $\mathcal{D}[0,\infty)$.
(We could equally work with the spaces of continuous functions (replacing certain piecewise constant functions by the piecewise linear continuous interpolants throughout.)

Let $(\Omega,m,q)$, $(\Lambda,m_\Lambda,Q)$ and $r:\Lambda\to\Z^+$ be as in Theorem~\ref{thm-MZ}.  Recall the relation $Q=q^r$ and the notation
$\bar r=\int_\Lambda r\,dm_\Lambda$.
Define the Birkhoff sums $\phi_n=\sum_{j=0}^{n-1}\phi\circ q^j$,
$\Phi_n=\sum_{j=0}^{n-1}\Phi\circ Q^j$,
$r_n=\sum_{j=0}^{n-1}r\circ Q^j$.
Also define $\Psi=\max_{0\le\ell<r}|\phi_\ell|:\Lambda\to\R$
and the cadlag processes $w_n,W_n$, setting
\[
w_n(t)=n^{-\frac12}\phi_{[nt]}, \quad
W_n(t)=n^{-\frac12}\Phi_{[nt]}.
\]

Let $N_k=\sum_{\ell=1}^k1_{\Lambda}\circ
q^\ell=\max\{n\geq0:r_n\leq k\}$ denote the lap numbers of $\Lambda$. The
visits to $\Lambda$, as counted by the lap numbers $N_k$, separate the consecutive \emph{excursions
from} $\Lambda$.
Then we can write
\[
\phi_k=\Phi_{N_k}+R_k\quad\text{on }\Lambda
\]
with remainder term $R_k=\sum_{\ell=r_{N_k}}^{k-1}\phi\circ q^\ell
=\phi_{k-r_{N_k}}\circ Q^{N_k}$ encoding the contribution of the
incomplete last excursion (if any). 
Next, decompose the rescaled process $w_n(t)=n^{-\frac12}\phi_{[nt]}$ 
accordingly, writing
\begin{align*}
w_n(t)=U_n(t)+V_n(t) \quad\text{on }\Lambda
\end{align*}
where
\[
U_n(t)=n^{-\frac12}\Phi_{N_{[tn]}}, \enspace
\text{and}\enspace V_n(t)=n^{-\frac12}
R_{_{[tn]}}. 
\]
The excursions correspond to the intervals $[t_{n,j},t_{n,j+1})$, $j\geq0$, where
$t_{n,j}:\Lambda\to\lbrack0,\infty)$\ is given by $t_{n,j}=r_j/n$. Note
that 
\begin{equation}
t\in\lbrack t_{n,N_{[tn]}},t_{n,N_{[tn]}+1})\text{\quad for }t>0\text{ and }n\geq1.
\label{eq-tn}
\end{equation}

\paragraph{Some almost sure results}
We record some consequences of the ergodic theorem.
But first an elementary observation, the proof of which we omit.

\begin{prop}
\label{prop-cn}Let $s>0$ and let $(c_n)_{n\geq1}$ be a sequence in
$\mathbb{R}$ such that $n^{-s}c_n\to c$. Define a
sequence of functions $C_n:[0,\infty)\to\mathbb{R}$ by letting
$C_n(t)=n^{-s}c_{[tn]}-t^sc$. Then, for any $T>0$,
$(C_n)_{n\geq1}$ converges to $0$ uniformly on $[0,T]$. \qed
\end{prop}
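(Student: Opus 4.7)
The plan uses only the hypothesis $n^{-s}c_n\to c$ and is essentially an $\varepsilon$-$N$ argument. I will fix $\varepsilon>0$ and choose $N$ so that $|c_m/m^s-c|<\varepsilon$ for all $m\geq N$, then split the analysis at $[tn]=N$, i.e., between the shrinking initial segment $t\in[0,N/n)$ and the main region $t\in[N/n,T]$.

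On the initial segment, both $|n^{-s}c_{[tn]}|\leq n^{-s}\max_{k<N}|c_k|$ and $|t^sc|\leq (N/n)^s|c|$ tend to $0$ as $n\to\infty$, so $C_n$ converges to $0$ uniformly there. On the main region I will use the algebraic decomposition
\[
C_n(t)=\Bigl\{\Bigl(\frac{[tn]}{n}\Bigr)^s-t^s\Bigr\}c+\Bigl(\frac{[tn]}{n}\Bigr)^s\Bigl(\frac{c_{[tn]}}{[tn]^s}-c\Bigr).
\]
The second summand is bounded in absolute value by $T^s\varepsilon$ uniformly in $t$, by the choice of $N$. For the first summand, the estimate $|[tn]/n-t|\leq 1/n$ combined with the uniform continuity of $x\mapsto x^s$ on the compact interval $[0,T+1]$ forces uniform convergence to $0$. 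Letting $n\to\infty$ and then $\varepsilon\to 0$ completes the proof.

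There is no real obstacle; the only mild subtlety is that for $s\in(0,1)$ the power map $x\mapsto x^s$ is merely H\"older (not Lipschitz) at the origin, but uniform continuity on a compact interval containing $0$ is all that the argument uses, so the case $s\in(0,1)$ is handled without modification.
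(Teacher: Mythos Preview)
Your argument is correct. The paper actually omits the proof entirely (the proposition is stated with an immediate \qed\ and introduced as ``an elementary observation, the proof of which we omit''), so there is no approach to compare against; your $\varepsilon$--$N$ argument with the split at $t=N/n$ and the decomposition on the main region is exactly the kind of standard verification the authors had in mind.
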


\begin{cor}
\label{cor-Psi}
If $\Psi\in L^2$, then condition~(b) of Theorem~\ref{thm-MZ} is satisfied.
\end{cor}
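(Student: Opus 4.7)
The plan is to verify condition~(b) of Theorem~\ref{thm-MZ} directly from the $L^2$ assumption on $\Psi$, using nothing more than a union bound combined with Chebyshev-type tail estimates. Since $Q$ preserves $m_\Lambda$, the random variables $\Psi\circ Q^j$ are identically distributed under $m_\Lambda$, so the problem reduces to controlling the tail $m_\Lambda(|\Psi|>\epsilon\sqrt n)$.

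Fix $\epsilon>0$. By $Q$-invariance of $m_\Lambda$ and a union bound,
\[
m_\Lambda\Bigl(\max_{0\le j\le n}|\Psi\circ Q^j|>\epsilon\sqrt n\Bigr)\le (n+1)\,m_\Lambda(|\Psi|>\epsilon\sqrt n).
\]
The key observation is that for any $L^2$ random variable $X$ on a probability space $(\Omega',P)$ one has $nP(X^2>n)\to 0$: indeed,
\[
nP(X^2>n)\le \int_{\{X^2>n\}} X^2\,dP,
\]
and the right-hand side tends to $0$ as $n\to\infty$ by the dominated convergence theorem, since $X^2\in L^1(P)$ and $1_{\{X^2>n\}}\to 0$ pointwise. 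Applying this with $X=\Psi/\epsilon$ on $(\Lambda,m_\Lambda)$ yields $n\,m_\Lambda(|\Psi|>\epsilon\sqrt n)\to 0$, and hence $(n+1)\,m_\Lambda(|\Psi|>\epsilon\sqrt n)\to 0$.

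Combining the two displays gives $m_\Lambda(n^{-\frac12}\max_{0\le j\le n}|\Psi\circ Q^j|>\epsilon)\to 0$ for every $\epsilon>0$, which is precisely condition~(b) of Theorem~\ref{thm-MZ}. There is no genuine obstacle here; the argument is entirely standard and the only point to watch is the sharp form of the Chebyshev estimate (the naive bound $m_\Lambda(|\Psi|>\epsilon\sqrt n)\le \|\Psi\|_2^2/(\epsilon^2 n)$ gives only $O(1)$ after multiplying by $n+1$, whereas the slightly stronger statement $n\,m_\Lambda(|\Psi|>\epsilon\sqrt n)=o(1)$ obtained from dominated convergence on the set $\{\Psi^2>\epsilon^2 n\}$ is exactly what is needed).
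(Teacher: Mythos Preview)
Your argument is correct, but it differs from the paper's. The paper applies the ergodic theorem to the nonnegative $L^1$ sequence $\Psi^2\circ Q^j$: since $n^{-1}\sum_{j=0}^{n-1}\Psi^2\circ Q^j$ converges a.e., the individual terms satisfy $n^{-1}\Psi^2\circ Q^n\to 0$ a.e., and then Proposition~\ref{prop-cn} (with $s=\tfrac12$) upgrades this pointwise statement to the uniform one $\max_{0\le j\le n}n^{-1/2}\Psi\circ Q^j\to 0$ a.e. Your route bypasses both the ergodic theorem and Proposition~\ref{prop-cn}, replacing them with the union bound and the elementary $L^2$ tail fact $n\,m_\Lambda(\Psi^2>\epsilon^2 n)\to 0$. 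What you gain is a self-contained two-line argument; what the paper's approach gains is the slightly stronger conclusion of almost sure (rather than merely in-probability) convergence, though only the latter is required for condition~(b).
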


\begin{proof}
By the ergodic theorem, $n^{-1}\sum_{j=0}^{n-1}\Psi^2\circ Q^j\to \int_\Lambda\Psi^2\,dm_\Lambda$
almost everywhere on $\Lambda$ and hence $n^{-1}\Psi^2\circ Q^n\to0$ almost everywhere.   Now take square roots and apply Proposition~\ref{prop-cn} with $s=\frac12$.
\end{proof}

\begin{lemma}
\label{lem-lap} The lap numbers $N_k$ satisfy
$k^{-1}N_k\to 1/\bar r$ a.e.\ on $\Omega$
as $k\to\infty$.
Moreover, for any $T>0$,
\[
\sup\nolimits_{t\in\lbrack0,T]}| k^{-1}N_{[tk]}-t/\bar r| \to0\text{\quad a.e. on
}\Omega\text{\quad as }k\to\infty.
\]
\end{lemma}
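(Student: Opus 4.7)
The plan is to deduce the pointwise convergence directly from Birkhoff's ergodic theorem applied to the indicator $1_\Lambda$. Since $(\Omega,m,q)$ is ergodic and measure-preserving, the ergodic theorem yields
\[
\frac{1}{k}N_k=\frac{1}{k}\sum_{\ell=1}^{k}1_\Lambda\circ q^\ell\;\longrightarrow\;m(\Lambda)
\]
almost everywhere on $\Omega$. Kac's formula, which is valid under our standing hypotheses (ergodicity of $q$ together with integrability of the first return time $r$), asserts $\int_\Lambda r\,dm=1$, so $m(\Lambda)\,\bar r=1$ and hence $m(\Lambda)=1/\bar r$. This establishes the first assertion.

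For the second assertion, I would reduce uniform convergence on $[0,T]$ to pointwise convergence plus monotonicity. Fix $t>0$. From $k^{-1}N_k\to 1/\bar r$ a.e., together with $[tk]/k\to t$ and the obvious identity $k^{-1}N_{[tk]}=([tk]/k)\cdot [tk]^{-1}N_{[tk]}$, we get the pointwise statement $k^{-1}N_{[tk]}\to t/\bar r$ a.e.\ for each fixed $t\in[0,T]$ (the case $t=0$ being trivial, as $N_0=0$).

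Now fix a point in the full-measure set where the pointwise convergence holds simultaneously for every rational $t\in[0,T]\cap\mathbb{Q}$. The maps $t\mapsto k^{-1}N_{[tk]}$ are monotone non-decreasing step functions of $t$, and the putative limit $t\mapsto t/\bar r$ is continuous and monotone on the compact interval $[0,T]$. Polya's uniform convergence theorem (monotone pointwise convergence on a dense set to a continuous monotone function on a compact interval upgrades automatically to uniform convergence) then gives
\[
\sup_{t\in[0,T]}\bigl|k^{-1}N_{[tk]}-t/\bar r\bigr|\longrightarrow 0
\]
on this full-measure set. This completes the argument.

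There is essentially no deep obstacle here: the whole content is the ergodic theorem plus Kac's formula plus Polya's principle. The only minor care required is to choose a single full-measure set that works simultaneously for a countable dense family of $t$'s before invoking Polya, and to observe that Kac's formula applies in the present generality (ergodic $q$, integrable $r$), which is precisely what was assumed in Theorem~\ref{thm-MZ}.
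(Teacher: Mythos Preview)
Your proof is correct and follows essentially the same approach as the paper: the first assertion is the ergodic theorem for $1_\Lambda$ together with Kac's formula $m(\Lambda)=1/\bar r$, and the second is an elementary upgrade from pointwise to uniform convergence. The only difference is that the paper packages the uniformity step as its Proposition~\ref{prop-cn} (applied with $s=1$ to the sequence $c_n=N_n$), whereas you invoke a Polya-type argument using the monotonicity of $k\mapsto N_k$; both are routine real-analysis facts, and the paper's formulation has the slight advantage of not needing monotonicity (so it can be reused, e.g., in Corollary~\ref{cor-Psi}).
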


\begin{proof}
Recall that $m(\Lambda)=1/\bar r$ (Kac' formula).
Hence the first statement is immediate from the ergodic theorem. The second then
follows by Proposition~\ref{prop-cn} with $s=1$.
\end{proof}

\paragraph{Convergence of $U_n$.}

We require a standard but technical result.
\begin{prop} \label{prop-Un}  Suppose that $A_n$, $B_n$ are sequences in $\mathcal{D}[0,T]$ and that $A_n\to_w A$, $B_n\to_w B$ in the sup-norm topology.  Suppose further that $A$, $B$ are continuous and that $B$ is nonrandom.
	Then $(A_n,B_n)\to_w(A,B)$ in $\mathcal{D}[0,T]\times\mathcal{D}[0,T]$ with the sup-norm topology.
	\end{prop}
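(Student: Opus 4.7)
The approach is a standard Slutsky-type argument: weak convergence to a nonrandom limit can be upgraded to convergence in probability, and then joint weak convergence follows from combining this with the weak convergence of $A_n$ to $A$.

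My first step would be to show that $\|B_n-B\|_\infty\to 0$ in probability. Since $B$ is a fixed (nonrandom) element of $\mathcal{D}[0,T]$, the map $g\mapsto \|g-B\|_\infty$ is continuous in the sup-norm topology, so the continuous mapping theorem applied to $B_n\to_w B$ gives $\|B_n-B\|_\infty\to_w 0$. Weak convergence to the constant $0$ is equivalent to convergence in probability, so $\PP(\|B_n-B\|_\infty\ge\epsilon)\to 0$ for every $\epsilon>0$.

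The second step is to verify joint weak convergence by showing that $E[f(A_n,B_n)]\to E[f(A,B)]$ for every bounded uniformly continuous $f$ on $\mathcal{D}[0,T]\times\mathcal{D}[0,T]$ (it suffices, by the Portmanteau theorem, to test against such functions). I would write
\[
E[f(A_n,B_n)]-E[f(A,B)] = \bigl(E[f(A_n,B)]-E[f(A,B)]\bigr) + E[f(A_n,B_n)-f(A_n,B)].
\]
The first bracket tends to zero because $g\mapsto f(g,B)$ is bounded and continuous and $A_n\to_w A$. For the second bracket, given $\epsilon>0$, uniform continuity of $f$ provides $\delta>0$ such that $\|h'-h\|_\infty<\delta$ implies $|f(g,h')-f(g,h)|<\epsilon$ uniformly in $g$; then on the event $\{\|B_n-B\|_\infty<\delta\}$ the integrand is bounded by $\epsilon$, while on the complement it is bounded by $2\|f\|_\infty$, and the complement has vanishing probability by Step~1. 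Letting $n\to\infty$ and then $\epsilon\to 0$ concludes the argument.

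The main technical subtlety (rather than a true obstacle) is that $\mathcal{D}[0,T]$ with sup-norm is not separable, which could in principle cause measurability issues for weak convergence. This is defused by the hypothesis that the limits $A$ and $B$ are continuous: their laws are concentrated on the separable subspace $C([0,T])\subset\mathcal{D}[0,T]$, so the usual Portmanteau characterization via bounded uniformly continuous test functions remains valid, and the above calculation goes through without modification. An alternative route, if one prefers, is to invoke the Skorohod representation theorem on the separable subspace $C([0,T])$ to realize $A_n\to A$ and $B_n\to B$ almost surely on a common probability space; since $B$ is deterministic one then has $(A_n,B_n)\to(A,B)$ almost surely, hence in distribution.
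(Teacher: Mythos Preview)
Your proof is correct. You give a direct Slutsky-type argument in the sup-norm topology: first upgrade $B_n\to_w B$ to convergence in probability (using that $B$ is deterministic), then combine with $A_n\to_w A$ via the Portmanteau characterisation against bounded uniformly continuous test functions. You correctly flag the non-separability of $(\mathcal{D}[0,T],\|\cdot\|_\infty)$ as the one genuine subtlety and note that continuity of the limits defuses it.

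The paper takes a different, shorter route. Rather than arguing directly in the sup-norm topology, it observes that because the limits $A$ and $B$ are continuous, weak convergence in sup-norm is equivalent to weak convergence in the Skorohod $J_1$ topology (since $J_1$-convergence to a continuous limit forces uniform convergence). As the Skorohod space is separable and Polish, the standard Slutsky lemma for joint convergence (e.g.\ Billingsley, Theorem~3.9) applies there without measurability worries, and the conclusion transfers back. So the paper's proof is essentially a one-line reduction to a known result in a nicer space, whereas yours is a self-contained proof of that result in the original space. Your approach has the virtue of making explicit exactly where the non-randomness of $B$ is used and where continuity of the limits enters; the paper's is more compact but assumes the reader is comfortable with the separable-case version. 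Your alternative route via Skorohod representation on $C([0,T])$ is close in spirit to the paper's reduction.
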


\begin{proof} The issue here is that the sup-norm topology is not separable.
	But since $A,B$ are continuous, convergence in the sup-norm topology is equivalent to convergence in the Skorokhod topology which is separable, and then the result is standard.
\end{proof}

\begin{lemma}
\label{lem-Un}
$U_n\to_w \widetilde W$
in $\mathcal{D}[0,\infty)$ on $(\Lambda,m_\Lambda)$.
\end{lemma}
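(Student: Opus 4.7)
The main idea is a time-change.  Set $\lambda_n(t) = N_{[tn]}/n$; since $n\lambda_n(t) = N_{[tn]}$ is an integer,
\[
W_n(\lambda_n(t)) = n^{-1/2}\Phi_{[n\lambda_n(t)]} = n^{-1/2}\Phi_{N_{[tn]}} = U_n(t),
\]
i.e.\ $U_n = W_n\circ\lambda_n$.  The plan is to establish joint weak convergence $(W_n,\lambda_n)\to_w(W,\lambda)$ with $\lambda(t)=t/\bar r$, and then deduce the conclusion from a continuous-mapping argument applied to composition.

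Fix $T>0$.  Since $N_k\le k$, we have $\lambda_n(t)\le t\le T$ for all $t\in[0,T]$, so that $W_n$, $\lambda_n$ and $W_n\circ\lambda_n$ can all be regarded as elements of $\mathcal{D}[0,T]$.  By hypothesis~(a), $W_n\to_w W$ in $\mathcal{D}[0,T]$.  By Lemma~\ref{lem-lap}, $\lambda_n\to\lambda$ uniformly on $[0,T]$ a.e.\ on $\Omega$ (hence on $\Lambda$), and in particular in distribution.  Since both $W$ and $\lambda$ are continuous and $\lambda$ is nonrandom, Proposition~\ref{prop-Un} yields the joint convergence $(W_n,\lambda_n)\to_w(W,\lambda)$ in $\mathcal{D}[0,T]\times\mathcal{D}[0,T]$ in the sup-norm topology.

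Next, the composition map $(f,g)\mapsto f\circ g$ from $\mathcal{D}[0,T]^2$ to $\mathcal{D}[0,T]$ is continuous (in sup-norm) at every $(f,g)$ for which $f$ is uniformly continuous on $[0,T]$ and $g([0,T])\subset[0,T]$, via the elementary bound
\[
\|f_n\circ g_n - f\circ g\|_\infty \le \|f_n - f\|_\infty + \omega_f(\|g_n - g\|_\infty),
\]
where $\omega_f$ is the modulus of continuity of $f$.  Since $W$ is almost surely continuous on $[0,T]$ and $\lambda([0,T])\subset[0,T]$, this continuity point condition is satisfied at $(W,\lambda)$ with probability one, so the continuous mapping theorem gives $U_n=W_n\circ\lambda_n\to_w W\circ\lambda$ in $\mathcal{D}[0,T]$.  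Brownian scaling identifies the process $\{W(t/\bar r):t\ge0\}$ in distribution with $\widetilde W=(\bar r)^{-1/2}W$, yielding $U_n\to_w\widetilde W$ in $\mathcal{D}[0,T]$.  Since $T$ was arbitrary, the same convergence holds in $\mathcal{D}[0,\infty)$ with the topology of uniform convergence on compact sets.

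The main technical point is the joint weak convergence $(W_n,\lambda_n)\to_w(W,\lambda)$, which relies essentially on both the continuity and the nonrandomness of $\lambda$ (exactly the hypotheses of Proposition~\ref{prop-Un}), since the sup-norm topology on $\mathcal{D}[0,T]$ is not separable and independent samples are not in general mutually measurable for the product Borel structure.  Once this joint convergence is secured, the time-change step reduces to a routine application of continuous mapping.
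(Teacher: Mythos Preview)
Your proof is correct and follows essentially the same approach as the paper: both arguments write $U_n=W_n\circ u_n$ with $u_n(t)=n^{-1}N_{[tn]}$, invoke Lemma~\ref{lem-lap} for $u_n\to u(t)=t/\bar r$, use Proposition~\ref{prop-Un} to obtain joint convergence $(W_n,u_n)\to_w(W,u)$, and then apply the continuous mapping theorem to the composition map. Your version is slightly more explicit about the Brownian scaling identification $W(\cdot/\bar r)\overset{d}{=}(\bar r)^{-1/2}W=\widetilde W$, which the paper states without comment.
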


\begin{proof}
For
$n\geq1$ and $t\in\lbrack0,\infty)$ we define non-negative random variables on
$(\Lambda,m_{\Lambda})$ by letting $u_n(t)=n^{-1}N_{[tn]}$. Since $[u_n(t)\,n]
=N_{[tn]}$, we have
\begin{align*}
U_n(t)=W_n(u_n(t))\quad\text{on }\Lambda\text{ for }n\geq1\text{ and }
t\geq0.
\end{align*}
We regard $U_n$, $W_n$, $W$, $u_n$ as random elements of
$\mathcal{D}=\mathcal{D}[0,\infty)$. 
Let $u$ denote the constant random element of
$\mathcal{D}$ given by $u(t)\equiv t/\bar r$, $t\geq0$.

By Lemma~\ref{lem-lap}, for almost every $y\in
\Lambda$ we have $u_n(.)(y)\to u(.)(y)$ uniformly on compact subsets of
$[0,\infty)$. Hence, $u_n\to u$ a.e. in $\mathcal{D}$.
By condition~(a) of Theorem~\ref{thm-MZ}, we also have $W_n
\to_w W$ 
in $\mathcal{D}$.  Since $W$ and $u$ are continuous and $u$ is nonrandom,  it follows from Proposition~\ref{prop-Un} that
\begin{align*}
(W_n,u_n)\to_w (W,u)\text{\quad
in }\mathcal{D}\times\mathcal{D}.
\end{align*}
The composition map 
$\mathcal{D}\times\mathcal{D}\to\mathcal{D}$,
$(g,v)\mapsto g\circ v$, is well-defined and is easily seen to be
continuous in the sup-norm topology.
Hence it follows from the continuous mapping theorem that
$U_n=W_n\circ u_n\to_w W\circ u=\widetilde W$
as required.
\end{proof}

\paragraph{Convergence of $w_n$.}

\begin{lemma} \label{lem-Wn}
$\|w_n-U_n\|_\infty \le n^{-1/2}\max_{0\le j\le [Tn]}\Psi\circ Q^j$
a.e.\ on $\Lambda$.
\end{lemma}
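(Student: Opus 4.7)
The plan is straightforward: use the decomposition $w_n=U_n+V_n$ already recorded on $\Lambda$, where
\[
V_n(t)=n^{-1/2}R_{[tn]}=n^{-1/2}\phi_{[tn]-r_{N_{[tn]}}}\circ Q^{N_{[tn]}},
\]
and bound $|V_n(t)|$ uniformly in $t\in[0,T]$ by the tail excursion maximum $\Psi$.

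First I would fix $y\in\Lambda$ and $t\in[0,T]$. By definition of the lap number, $r_{N_{[tn]}}\le[tn]<r_{N_{[tn]}+1}=r_{N_{[tn]}}+r(Q^{N_{[tn]}}y)$, so the index $\ell:=[tn]-r_{N_{[tn]}}$ satisfies $0\le\ell<r(Q^{N_{[tn]}}y)$. Consequently
\[
|R_{[tn]}(y)|=|\phi_\ell(Q^{N_{[tn]}}y)|\le\max_{0\le\ell<r}|\phi_\ell|\bigl(Q^{N_{[tn]}}y\bigr)=\Psi\circ Q^{N_{[tn]}}(y),
\]
simply from the definition $\Psi=\max_{0\le\ell<r}|\phi_\ell|$.

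Next I would use the trivial bound $N_k\le k$ (each return takes at least one step) together with $[tn]\le[Tn]$ for $t\in[0,T]$ to conclude $N_{[tn]}\le[Tn]$. Therefore $\Psi\circ Q^{N_{[tn]}}(y)\le\max_{0\le j\le[Tn]}\Psi\circ Q^j(y)$, and taking suprema over $t\in[0,T]$ yields
\[
\|w_n-U_n\|_\infty=\|V_n\|_\infty\le n^{-1/2}\max_{0\le j\le[Tn]}\Psi\circ Q^j\quad\text{a.e. on }\Lambda,
\]
as claimed. There is no genuine obstacle here: the entire content is the elementary observation that an incomplete excursion is dominated by the corresponding $\Psi$-value, and that the relevant index is bounded by $[Tn]$. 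The lemma is set up precisely so that, combined with Theorem~\ref{thm-MZ}(b) and Lemma~\ref{lem-Un}, one can conclude $w_n\to_w\widetilde W$ via the triangle inequality in the next step of the argument.
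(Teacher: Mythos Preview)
Your proof is correct and takes essentially the same approach as the paper: bound the contribution of the incomplete excursion at each time $t$ by $\Psi\circ Q^{N_{[tn]}}$, then use $N_{[tn]}\le[tn]\le[Tn]$. The only cosmetic difference is that the paper partitions $[0,T]$ into excursion intervals $[T_{j-1},T_j]$ and notes $U_n(t)=w_n(T_{j-1})$ there, whereas you work pointwise via the decomposition $w_n-U_n=V_n$ already set up; your version is in fact slightly more direct.
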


\begin{proof}
	We decompose $[0,T]$ according to the consecutive excursions, letting $T_j=t_{n,j}\wedge T$, $j\le N_{[Tn]}+1$.
	Then $\|w_n-U_n\|_\infty \le \max_{1\le j\le N_{[Tn]}+1}
\sup_{t\in[T_{j-1},T_j]}|w_n(t)-U_n(t)|$.
But $U_n(t)=w_n(T_{j-1})$ for $t\in[T_{j-1},T_j)$ and
$U_n(T_j)=w_n(T_j)$  so
\begin{align*}
\sup_{t\in[T_{j-1},T_j]}|w_n(t)-U_n(t)| & \le 
\sup_{t\in[T_{j-1},T_j]}|w_n(t)-w_n(T_{j-1})|
\\ & \le n^{-1/2}\max_{0\le\ell<r\circ Q^{j-1}}|\phi_\ell\circ Q^{j-1}|
= n^{-1/2} \Psi\circ Q^{j-1}.
\end{align*}
Since $N_{[Tn]}\le [Tn]$, this yields the required result.
\end{proof}

\begin{pfof}{Theorem~\ref{thm-MZ}}
	First we prove the WIP assuming conditions~(a) and~(b).
Fix any $T>0$.  
It suffices to prove that $w_n\to_w \widetilde W$ in
$\mathcal{D}[0,T]$ on $(\Omega,m)$.
Moreover, since $m_\Lambda$  viewed as a probability measure on $\Omega$ is absolutely continuous with respect to $m$,
it suffices by~\cite[Corollary~3]{Zweimueller07}
to prove that $w_n\to_w \widetilde W$ in
$\mathcal{D}[0,T]$ on $(\Lambda,m_\Lambda)$.

By assumption (b) of Theorem~\ref{thm-MZ}
 and Lemma~\ref{lem-Wn},
$\|w_n-U_n\|_\infty\to0$ in probability on $(\Lambda,m_\lambda)$.
Also, by Lemma~\ref{lem-Un}, $U_n\to_w \widetilde W$ in
$\mathcal{D}[0,T]$ on $(\Lambda,m_\Lambda)$.
Hence, by~\cite[Theorem~3.1]{Billingsley1999}, 
$w_n\to_w \widetilde W$ in
$\mathcal{D}[0,T]$ on $(\Lambda,m_\Lambda)$ as required.

Finally, we prove the CLT assuming only condition~(a).  
By the above, it suffices to prove that $w_n(1)-U_n(1)\to0$ in probability
on $(\Lambda,m_\lambda)$.
A simple argument for this is given in~\cite[Appenxdix~A]{Gouezel07}; we 
sketch the main steps.  First, we can pass to the natural extension so that $q$ is invertible.  Then $w_n(1)-U_n(1)=n^{-1/2}H\circ q^n$ where $H:\Omega\to\R$ is the measurable function given by 
$H(x)=\sum_{j=1}^{s(x)}\phi(q^{-j}x)$ and $s(x)\ge0$ is least such that $q^{-s(x)}x\in\Lambda$.  It follows from invariance of $m$ that $n^{-1/2}H\circ q^n\to0$ in probability
on $(\Omega,m)$ and hence on $(\Lambda,m_\lambda)$.
\end{pfof}

 \paragraph{Acknowledgements}
GAG acknowledges funding from the Australian Research Council.
The research of IM was supported in part by EPSRC Grant EP/F031807/1 held 
at the University of Surrey and in part by a European
  Advanced Grant StochExtHomog (ERC AdG 320977).
We are grateful to S\'ebastien Gou\"ezel for pointing out an improvement to
Theorem~\ref{thm-L2}.


\begin{thebibliography}{10}

\bibitem{Aaronson}
J.~Aaronson. \emph{{An Introduction to Infinite Ergodic Theory}}. Math. Surveys
  and Monographs \textbf{50}, Amer. Math. Soc., 1997.

 \bibitem{Billingsley1999}
 P.~Billingsley. \emph{Convergence of probability measures}, second ed. Wiley
   Series in Probability and Statistics: Probability and Statistics, John Wiley
   \& Sons Inc., New York, 1999. A Wiley-Interscience Publication.

\bibitem{BtD}
T.~Br{\"o}cker and T.~tom Dieck. \emph{{Representations of Compact Lie
  Groups}}. Grad. Texts in Math. \textbf{98}, Springer, New York, 1985.

\bibitem{BHM05}
H.~Bruin, M.~Holland and I.~Melbourne. {Subexponential decay of correlations
  for compact group extensions of nonuniformly expanding systems}.
  \emph{Ergodic Theory Dynam. Systems} \textbf{25} (2005) 1719--1738.

\bibitem{ChazottesGouezel07}
J.-R. Chazottes and S.~Gou{\"e}zel. On almost-sure versions of classical limit
  theorems for dynamical systems. \emph{Probab. Theory Related Fields}
  \textbf{138} (2007) 195--234.

\bibitem{CohenConzesub}
G.~Cohen and J.-P. Conze. {The CLT for rotated ergodic sums and related
  processes}.  {\em Discrete. Contin. Dyn. Syst.} \textbf{33} (2013) 3981--4002.

\bibitem{DedeckerMerlevede09}
J.~Dedecker and F.~Merlev{\`e}de. Weak invariance principle and exponential
  bounds for some special functions of intermittent maps. \emph{High
  dimensional probability {V}: the {L}uminy volume}, Inst. Math. Stat. Collect.
  \textbf{5}, Inst. Math. Statist., Beachwood, OH, 2009, pp.~60--72.

\bibitem{Dolgopyat02}
D.~Dolgopyat. {On mixing properties of compact group extensions of hyperbolic
  systems}. \emph{Israel J. Math.} \textbf{130} (2002) 157--205.

\bibitem{FMT03}
M.~J. Field, I.~Melbourne and A.~T{\" o}r{\" o}k. {Decay of correlations,
  central limit theorems and approximation by Brownian motion for compact Lie
  group extensions}. \emph{Ergodic Theory Dynam. Systems} \textbf{23} (2003)
  87--110.

\bibitem{FMT05}
M.~J. Field, I.~Melbourne and A.~T{\" o}r{\" o}k. {Stable ergodicity for
  smooth compact Lie group extensions of hyperbolic basic sets}. \emph{Ergodic
  Theory Dynam. Systems} \textbf{25} (2005) 517--551.

\bibitem{Gordin69}
M.~I. Gordin. {The central limit theorem for stationary processes}.
  \emph{Soviet Math. Dokl.} \textbf{10} (1969) 1174--1176.

\bibitem{GM13}
G.~A. Gottwald and I.~Melbourne. 
A Huygens principle for diffusion and anomalous diffusion in spatially extended
systems.
{\em Proc.\ Natl.\ Acad.\ Sci. USA} {\bf 110} (2013) 8411--8416.


\bibitem{Gouezel04}
S.~Gou{\"e}zel. {Central limit theorem and stable laws for intermittent maps}.
  \emph{Probab. Theory Relat. Fields} \textbf{128} (2004) 82--122.

\bibitem{Gouezel07}
S.~Gou{\"e}zel. {Statistical properties of a skew product with a curve of
  neutral points}. \emph{Ergodic Theory Dynam. Systems} \textbf{27} (2007)
  123--151.

\bibitem{Hennion93}
H.~Hennion. {Sur un th\'eor\`eme spectral et son application aux noyaux
  lipchitziens}. \emph{Proc. Amer. Math. Soc.} \textbf{118} (1993) 627--634.

\bibitem{Hu04}
H.~Hu. Decay of correlations for piecewise smooth maps with indifferent fixed
  points. \emph{Ergodic Theory Dynam. Systems} \textbf{24} (2004) 495--524.


\bibitem{LiveraniSaussolVaienti99}
C.~Liverani, B.~Saussol and S.~Vaienti. {A probabilistic approach to
  intermittency}. \emph{Ergodic Theory Dynam. Systems} \textbf{19} (1999)
  671--685.

\bibitem{MN04}
I.~Melbourne and M.~Nicol. Statistical limit laws for equivariant observations.
  \emph{Stoch. Dyn.} \textbf{4} (2004) 1--13.

\bibitem{MN04b}
I.~Melbourne and M.~Nicol. Statistical properties of endomorphisms and compact
  group extensions. \emph{J. London Math. Soc.} \textbf{70} (2004) 427--446.

\bibitem{MN05}
I.~Melbourne and M.~Nicol. Almost sure invariance principle for nonuniformly
  hyperbolic systems. \emph{Commun. Math. Phys.} \textbf{260} (2005) 131--146.

\bibitem{MT04}
I.~Melbourne and A.~T{\" o}r{\" o}k. {Statistical limit theorems for suspension
  flows}. \emph{Israel J. Math.} \textbf{144} (2004) 191--209.

\bibitem{MZapp}
{I. Melbourne and R. Zweim{\"u}ller}. {Weak convergence to stable L\'evy
  processes for nonuniformly hyperbolic dynamical systems}. 
\emph{Ann\ Inst. H.  Poincar{\'e} (B) Probab. Statist.} To appear.

\bibitem{NMA01}
M.~Nicol, I.~Melbourne and P.~Ashwin. {Euclidean extensions of dynamical
  systems}. \emph{Nonlinearity} \textbf{14} (2001) 275--300.

\bibitem{PeligradWu10}
M.~Peligrad and W.~B. Wu. Central limit theorem for {F}ourier transforms of
  stationary processes. \emph{Ann. Probab.} \textbf{38} (2010) 2009--2022.

\bibitem{PomeauManneville80}
Y.~Pomeau and P.~Manneville. Intermittent transition to turbulence in
  dissipative dynamical systems. \emph{Comm. Math. Phys.} \textbf{74} (1980)
  189--197.

\bibitem{Sarig02}
O.~M. Sarig. {Subexponential decay of correlations}. \emph{Invent. Math.}
  \textbf{150} (2002) 629--653.

\bibitem{Tyran-Kaminska05}
M.~Tyran-Kami{\'n}ska. An invariance principle for maps with polynomial decay
  of correlations. \emph{Comm. Math. Phys.} \textbf{260} (2005) 1--15.

\bibitem{Young98}
L.-S. Young. Statistical properties of dynamical systems with some
  hyperbolicity. \emph{Ann. of Math.} \textbf{147} (1998) 585--650.

\bibitem{Young99}
L.-S. Young. Recurrence times and rates of mixing. \emph{Israel J. Math.}
  \textbf{110} (1999) 153--188.

\bibitem{Zweimueller07}
R.~Zweim{\"u}ller. Mixing limit theorems for ergodic transformations. \emph{J.
  Theoret. Probab.} \textbf{20} (2007) 1059--1071.

\end{thebibliography}
\end{document}